\newcolumntype{Y}{>{\centering\arraybackslash}X}
\DeclareSymbolFont{cyrletters}{OT2}{wncyr}{m}{n}
\DeclareMathSymbol{\Sha}{\mathalpha}{cyrletters}{"58}
\newcommand{\genlegendre}[4]{%
  \genfrac{(}{)}{}{#1}{#3}{#4}%
  \if\relax\detokenize{#2}\relax\else_{\!#2}\fi
}
\newtheorem{thm}{Theorem}[section]
\newtheorem{cor}[thm]{Corollary}
\newtheorem{lem}[thm]{Lemma}
\newcommand{\BIG}{\bBigg@{2}}
\newcommand{\vast}{\bBigg@{3}}
\newcommand{\Vast}{\bBigg@{5}}
\numberwithin{equation}{section}
\begin{document}
\setlength{\arrayrulewidth}{0.1mm}



\title[Class Number Divisibility and Selmer Rank]{Class number divisibility of $\mathbb{Q}(\sqrt{3p}, \sqrt{m-21pn^{2}})$ constructed from elliptic curves of $2$-Selmer rank exactly $1$}

\author[Debopam Chakraborty]{Debopam Chakraborty}
\address{Department of Mathematics\\ BITS-Pilani, Hyderabad campus\\
Hyderabad, INDIA}
\email{debopam@hyderabad.bits-pilani.ac.in}

\author[Vinodkumar Ghale]{Vinodkumar Ghale}
\address{Department of Mathematics\\ BITS-Pilani, Hyderabad campus\\
Hyderabad, INDIA}
\email{p20180465@hyderabad.bits-pilani.ac.in}

\author[Md Imdadul Islam]{Md Imdadul Islam}
\address{Department of Mathematics\\ BITS-Pilani, Hyderabad campus\\
Hyderabad, INDIA}
\email{p20200059@hyderabad.bits-pilani.ac.in}
\date{}

\subjclass[2020]{Primary 11R29, 11G07; Secondary 11G05, 11R11}
\keywords{Class number; Elliptic curve; Selmer group.}

\maketitle

\section*{Abstract}
\noindent The class number divisibility problem for number fields is one of the classical problems in algebraic number theory, which originated from Gauss' class number conjectures. The relation between the points on an elliptic curve and class number divisibility of a number field has been explored through the works of various mathematicians. Here, we explicitly construct an unramified abelian extension of a bi-quadratic field generated from points of a certain type of elliptic curve. Moreover, showing the $2$-Selmer rank of the said elliptic curve as $1$, we also construct an infinite family of bi-quadratic fields of even class number.  

\section{Introduction}
\noindent The class group of a number field $K$ measures how much the ring of integers $\mathcal{O}_{K}$ deviates from having unique factorization into irreducible elements. The ideal class group of a number field is the quotient of the group of fractional ideals of $K$ by the group of principal fractional ideals. The ideal class group of a number field $K$ can be identified with the Galois group of the maximal unramified abelian extension of $K$. Hence, an unramified abelian extension plays an important role in the class number divisibility problem for a number field.\\
\noindent Gauss was the first to formalize the definition of class groups for quadratic number fields through the language of binary quadratic forms. He conjectured that for $d < 0$, the class number $h(d)$ of $\mathbb{Q}(\sqrt{d})$ tends to infinity as $-d$ tends to infinity. Heilbronn proved this result in \cite{Heil}. The general Gauss' class number problem for imaginary quadratic fields was solved through the works of Goldfield \cite{Gold1} \cite{Gold2}, Gross, and Zagier \cite{Gross}. Several mathematicians studied ideal class groups of a number field through different tools that originate from the arithmetic of the elliptic curve. In \cite{Honda1} and \cite{Honda2}, T. Honda used elliptic curves to construct infinitely many quadratic fields with class numbers divisible by $3$. A.Sato followed Honda's work with a geometric interpretation and extended the results to the cases for primes $5$ and $7$ in \cite{Sato1}. In \cite{Soleng1}, R. Soleng gave a class number divisibility condition for quadratic fields with ideal class groups isomorphic to the torsion group of certain elliptic curves. Lemmermeyer \cite{Lem1} constructed an unramified quadratic extension of cubic fields generated from points of elliptic curves. A similar approach for bi-quadratic fields was taken by Chakraborty et al. in \cite{Chak1}. In a fundamental work of Coates and Sujatha \cite{Coat}, it was shown that the fine Selmer group, a subgroup of the classical Selmer group, satisfies stronger local conditions and shows stronger finiteness properties than the regular Selmer group. In \cite{Kumar}, Kumar Murty et al. proved the similarity between the growth of the $p$-rank of the Fine Selmer group and the growth of the ideal class group of a number field.\\
\noindent In this work, we construct a bi-quadratic field from the points of an elliptic curve and then simultaneously try to understand the $2$-part of the Selmer and Shafarevich Tate group of the said curve and also the class number divisibility problem of the bi-quadratic field constructed from the points of that curve. The main result of this work is the following theorem, which explicitly constructs an unramified abelian extension of a bi-quadratic field generated from an elliptic curve of $2$-Selmer rank exactly 1. We note that the elliptic curve here is distinctively different from the curve $y^{2} = x^{3} + m$ that was used in both \cite{Lem1} and \cite{Chak1}.
\begin{thm}\label{mainthm}
    Let $E_{p}: y^{2} = (x+6p) (x-9p) (x-18p)$ be an elliptic curve where $p$ is a prime such that  either $p \equiv 17, 113 \pmod {120}$. Then the $2$-Selmer rank of $E_{p}$ is exactly $1$. Moreover, if $\big(\frac{r}{t^{2}}, \frac{s}{t^{3}}\big)$ is an arbitrary point on $E_{p}$, then there exists an unramified quadratic extension of the bi-quadratic field $K = \mathbb{Q}(\sqrt{3p}, \sqrt{r-21pt^{2}})$ whenever $t$ is even and $\gcd(s,3p) = 1$. 
\end{thm}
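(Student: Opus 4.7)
\emph{Plan.} The theorem decomposes into two claims---computing the $2$-Selmer rank of $E_p$, and then, for each rational point on $E_p$, constructing an unramified quadratic extension of $K$---which I would attack separately.

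For the $2$-Selmer rank, since all three non-trivial $2$-torsion points of $E_p$ are rational, complete $2$-descent is the natural tool. The connecting map
$$\partial\colon E_p(\mathbb{Q})/2E_p(\mathbb{Q}) \hookrightarrow H^1(G_\mathbb{Q}, E_p[2]) \cong (\mathbb{Q}^\times/\mathbb{Q}^{\times 2})^2$$
sends $(x,y) \mapsto (x + 6p,\, x - 9p)$, the third coordinate $(x - 18p)$ being determined modulo squares by $\prod_i(x - e_i) = y^2$. Selmer classes are pairs $(d_1, d_2)$ whose supports lie in the set of primes of bad reduction, which here is $\{2, 3, 5, p\}$ (read off from the discriminant, a constant multiple of $(2^3 \cdot 3^4 \cdot 5)^2 p^6$). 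The congruence $p \equiv 17$ or $113 \pmod{120}$ translates into $p \equiv 1 \pmod 8$ together with $p$ being a quadratic non-residue modulo $3$ and modulo $5$, and these determine every Hilbert symbol needed for the local solvability checks at $\mathbb{Q}_2$, $\mathbb{Q}_3$, $\mathbb{Q}_5$, $\mathbb{Q}_p$ and $\mathbb{R}$. Running through each candidate pair $(d_1, d_2)$ and verifying these local conditions should pin the Selmer group down to $\mathbb{F}_2$-dimension exactly $3$: two dimensions from $E_p[2]$ and one new class, giving $2$-Selmer rank $1$.

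For the unramified extension, writing $P = (r/t^2, s/t^3)$ in lowest terms and setting $u := r - 21pt^2$, expansion of the curve equation yields the key identity
$$s^2 \,-\, 3p\,(18pt^3)^2 \;=\; r^2 u.$$
Interpreted inside $\mathbb{Q}(\sqrt{3p}) \subset K$, this says that $\gamma := s + 18pt^3\sqrt{3p}$ has norm $r^2 u$, and inside $K$ it becomes the ideal identity $(\gamma)(\bar\gamma) = (r\sqrt{u})^2$. The natural candidate is then $L := K(\sqrt{\gamma})$ (after possibly replacing $P$ by $-P$ to arrange $s > 0$, which is required for the infinite-place condition when $u > 0$ and $K$ is totally real). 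To establish unramifiedness of $L/K$ at finite primes, one analyses $(\gamma)\mathcal{O}_K$ prime by prime: at primes $\mathfrak{P}$ above $3$ or $p$ the hypothesis $\gcd(s, 3p) = 1$ forces $v_\mathfrak{P}(\gamma) = 0$; at primes above a rational prime $\ell \notin \{2, 3, 5, p\}$ the elements $\gamma$ and $\bar\gamma$ are essentially coprime, so $(\gamma)$ inherits a square valuation from $(r\sqrt{u})^2$; and at primes above $2$ the hypothesis ``$t$ even'' forces $r \equiv 1 \pmod 4$ and $s$ odd, which together with a short $2$-adic calculation should give that $\gamma$ is a local square modulo $4$, securing unramifiedness there.

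The principal obstacle is the $2$-adic bookkeeping. Under the given congruences $3p \equiv 3 \pmod 4$, so $2$ is ramified in $\mathbb{Q}(\sqrt{3p})$, and $\gcd((\gamma),(\bar\gamma))$ could a priori contain the prime above $2$; the hypothesis ``$t$ even'' is tuned precisely to push any such obstruction into the $\sqrt{u}$-component of $K$, where it is absorbed by ramification already present, a fact requiring a careful valuation count in $\mathcal{O}_K$. One must also verify that $\gamma \notin K^{\times 2}$, so that $L/K$ is a genuine quadratic extension; this will follow because $\gamma \in \mathbb{Q}(\sqrt{3p})$ is generically neither a square in $\mathbb{Q}(\sqrt{3p})$ nor $u$ times one. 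Once these points are settled, $L/K$ is an everywhere-unramified quadratic extension and $2 \mid h(K)$, yielding the infinite family of biquadratic fields of even class number claimed.
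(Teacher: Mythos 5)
Your plan follows the paper's proof almost step for step: the same complete $2$-descent with connecting map $(x,y)\mapsto(x+6p,\,x-9p)$, the same support set $\{2,3,5,p,\infty\}$, the same translation of $p\equiv 17,113\pmod{120}$ into $\left(\frac{2}{p}\right)=1$, $\left(\frac{3}{p}\right)=\left(\frac{5}{p}\right)=-1$, and the same conclusion that exactly one non-torsion class survives. Your element $\gamma=s+18pt^{3}\sqrt{3p}$ is literally the paper's $\alpha=s+t^{3}\sqrt{abc}$ with $(a,b,c)=(9p,6p,18p)$, your identity $s^{2}-3p(18pt^{3})^{2}=r^{2}u$ is the paper's norm computation, and the unramified extension is the same $K(\sqrt{\gamma})$, with the coprimality of $(\gamma)$ and $(\bar\gamma)$ away from $2$ forcing $(\gamma)=\mathfrak{a}^{2}$.

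There is one concrete step that fails as written: your claim that ``$t$ even forces $r\equiv 1\pmod 4$ and $s$ odd, which \ldots should give that $\gamma$ is a local square modulo $4$.'' Since $t$ is even, $\gamma\equiv s\pmod{8\mathcal{O}_K}$, and nothing in the hypotheses prevents $s\equiv 3\pmod 4$; in that case $\gamma$ is not congruent to a square modulo $4$ and the primes above $2$ would a priori ramify in $K(\sqrt{\gamma})/K$. Your proposed remedy of replacing $P$ by $-P$ to make $s>0$ does not resolve this, because when $u>0$ (so $K$ is totally real and total positivity of $\gamma$ is also needed at the infinite places) the sign of $s$ is already pinned down and may be incompatible with $s\equiv 1\pmod 4$. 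The paper's missing ingredient is to replace $\alpha$ by $3\alpha$ in the real case with $s\equiv 3\pmod 4$: since $3$ ramifies in $K/\mathbb{Q}$, the ideal $(3\alpha)$ is still the square of an ideal, $3$ is totally positive, and $3s\equiv 1\pmod 4$, so $K(\sqrt{3\alpha})/K$ is unramified everywhere (with $-\alpha$ handling the imaginary case). This is exactly why the theorem is stated for a curve in which $3$ ramifies in $K$; without that adjustment the $2$-adic step of your argument does not close.
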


\noindent The following result deals with the case when $p$ is a prime of the form $p \equiv 53, 77 \pmod {120}$. The result ensures the rank of $E_{p}$ to be zero. 
\begin{cor}\label{cor1}
        Let $E_{p}: y^{2} = (x+6p) (x-9p) (x-18p)$ be an elliptic curve where $p$ is a prime such that $p \equiv 53, 77 \pmod {120}$. Then the $2$-Selmer rank of $E_{p}$ is $0$.
\end{cor}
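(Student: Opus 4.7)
The plan is to run the $2$-descent of Theorem~\ref{mainthm} verbatim and to show that the single non-trivial Selmer class exhibited there is killed by an additional local obstruction when $p \equiv 53, 77 \pmod{120}$. Since $E_p[2](\mathbb{Q})$ is full, I would start from the standard descent map
\begin{equation*}
\phi: E_p(\mathbb{Q})/2E_p(\mathbb{Q}) \hookrightarrow \ker\!\left(\textstyle\prod: (\mathbb{Q}^*/\mathbb{Q}^{*2})^3 \to \mathbb{Q}^*/\mathbb{Q}^{*2}\right),\quad (x,y)\mapsto (x+6p,\,x-9p,\,x-18p),
\end{equation*}
which factors through the $2$-Selmer group $S^{(2)}(E_p/\mathbb{Q})$. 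My task is to prove $|S^{(2)}(E_p/\mathbb{Q})| = 4$, so that the Selmer group coincides with the image of $E_p[2](\mathbb{Q})$.

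First I would list the candidate triples $(d_1,d_2,d_3)$: each $d_i$ is squarefree, supported on $\{-1,2,3,5,p\}$, and $d_1 d_2 d_3$ is a square. For each candidate not in the image of $E_p[2](\mathbb{Q})$, I would inspect the homogeneous space
\begin{equation*}
d_2 v^2 - d_1 u^2 = 15p, \qquad d_3 w^2 - d_1 u^2 = 24p,
\end{equation*}
and test for $\mathbb{Q}_v$-points at each $v \in S := \{\infty,2,3,5,p\}$. At each odd place the local condition reduces via the Hilbert symbol to the Legendre symbols $\legendre{-1}{p}$, $\legendre{2}{p}$, $\legendre{3}{p}$, $\legendre{5}{p}$, together with $p \bmod 4$ and $p \bmod 8$.

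Next I would observe that quadratic reciprocity gives, for both residue ranges $p \equiv 17, 113 \pmod{120}$ and $p \equiv 53, 77 \pmod{120}$, the equalities $p \equiv 1 \pmod 4$, $\legendre{3}{p} = -1$, and $\legendre{5}{p} = -1$; the two families are distinguished only by $\legendre{2}{p}$, which is $+1$ when $p \equiv 1 \pmod 8$ (the Theorem~\ref{mainthm} range) and $-1$ when $p \equiv 5 \pmod 8$ (the corollary range). In the main theorem exactly one non-trivial candidate survives every local test; when $\legendre{2}{p}$ flips to $-1$, the corresponding Hilbert-symbol factor at either $\mathbb{Q}_2$ or $\mathbb{Q}_p$ changes sign, and that candidate loses its $\mathbb{Q}_v$-point. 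All remaining candidates are eliminated by the same arguments as in Theorem~\ref{mainthm}, so $S^{(2)}(E_p/\mathbb{Q}) = \phi(E_p[2](\mathbb{Q}))$, and the $2$-Selmer rank is $0$.

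The main difficulty will be the $2$-adic bookkeeping: since $\mathbb{Q}_2^*/\mathbb{Q}_2^{*2}$ has order $8$, each candidate breaks into several residue subcases at the prime $2$, and one must verify that the Hilbert-symbol factor governing the surviving class from Theorem~\ref{mainthm} depends on $\legendre{2}{p}$ in exactly the way asserted. Once this $2$-adic computation is carried out, the corollary follows by retracing the argument of the main theorem in the new congruence range.
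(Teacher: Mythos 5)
Your strategy is the same as the paper's in outline --- both exploit the fact that the two congruence families agree modulo $3$, $4$ and $5$ and differ only in the value of $\big(\frac{2}{p}\big)$, and both aim to rerun the descent of Theorem \ref{mainthm} and kill the one surviving class $(3,p)$ by a fresh local obstruction. But as written the proposal has a genuine gap at its decisive step: you never exhibit the obstruction, and you hedge on whether it lives at $\mathbb{Q}_2$ or at $\mathbb{Q}_p$. The paper pins it down at $v=p$: by Lemma \ref{lem4} one may take $v_p(z_1),v_p(z_3)\geq 0$ in equation (\ref{eq3}) for $(b_1,b_2)=(3,p)$, i.e.\ $3z_1^2-3pz_3^2=24p$, which forces $p\mid z_1$ and then $z_3^{2}\equiv -8\pmod p$; this is insoluble because $\big(\frac{-8}{p}\big)=\big(\frac{2}{p}\big)=-1$ (using $p\equiv 1\pmod 4$). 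Your alternative of locating the obstruction at $2$ would be substantially harder to make rigorous: the failure of one particular choice of $(z_2,z_3)$ modulo $8$ proves nothing, and a genuine $2$-adic non-solvability proof would require sweeping all of $\mathbb{Q}_2^{*}/(\mathbb{Q}_2^{*})^{2}$. The "main difficulty" you identify is therefore the wrong one; the clean argument is $p$-adic.

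There is a second gap: the assertion that "all remaining candidates are eliminated by the same arguments as in Theorem \ref{mainthm}" is not true and cannot simply be waved through. The $p$-adic elimination in Lemma \ref{lem7}(iii) rests on the necessary condition $\big(\frac{b_1/p}{p}\big)=\big(\frac{15}{p}\big)=1$, and the set of $b_1$ passing this test changes when $\big(\frac{2}{p}\big)$ flips sign (e.g.\ $6$ and $10$ become residues while $2$ and $30$ become non-residues). Consequently the coset bookkeeping of Lemmas \ref{lem6}--\ref{lem8} does not carry over verbatim, and several pairs that were dead in the main theorem must be re-examined. The paper does exactly this, disposing of $(5p,\pm 5)$ via $z_1^{2}\equiv 3\pmod p$ and of $(1,15)$, $(1,5p)$ via $z_1^{2}\equiv -p$ and $z_1^{2}\equiv -3\pmod 5$, before turning to $(3,p)$. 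Without this re-audit your argument only shows that the Selmer group is contained in something larger than the image of the $2$-torsion, so the conclusion $\#\,\mbox{Sel}_2(E_p/\mathbb{Q})=4$ does not yet follow.
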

\noindent The following result constructs an infinite family of bi-quadratic fields of even class number generated from $E_{p}$ for a fixed prime $p \equiv 17, 113 \pmod {120}$. 
\begin{cor}\label{cor2}
    If $P_{0} = \Big(\frac{r_{0}}{t_{0}^{2}}, \frac{s_{0}}{t_{0}^{3}}\Big)$ is an arbitrary point on $E_{p}$ such that $t_{0}$ is even and $\gcd(s_{0}, 3p) = 1$, then there exists an infinite family of the bi-quadratic fields $K_{i} = \mathbb{Q}(\sqrt{3p}, \sqrt{r_{i}-21pt_{i}^{2}})$ with even class number, $i = 0,1,2,...$ and $P_{i} = 2^{i} P_{0} =  \Big(\frac{r_{i}}{t_{i}^{2}}, \frac{s_{i}}{t_{i}^{3}}\Big)$.
\end{cor}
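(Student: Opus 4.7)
The plan is to apply Theorem~\ref{mainthm} to each iterate $P_i = 2^{i} P_0$ and then show that infinitely many of the resulting fields $K_i$ are pairwise distinct. Three things need verification: $P_0$ has infinite order (so the $P_i$ are all distinct), the hypotheses ``$t_i$ even'' and ``$\gcd(s_i,3p)=1$'' propagate under doubling (so Theorem~\ref{mainthm} can be applied to every $P_i$), and infinitely many of the $K_i$ are distinct.

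For the first, the $2$-torsion of $E_p$ consists of the four integral points $O,(-6p,0),(9p,0),(18p,0)$, and Nagell--Lutz rules out further (necessarily integral) torsion; since $2 \mid t_0$ forces $x_0$ non-integral, $P_0$ is non-torsion, so the $P_i$ are pairwise distinct. (Combined with Theorem~\ref{mainthm} this pins $\operatorname{rank} E_p(\mathbb{Q}) = 1$ and $\Sha(E_p/\mathbb{Q})[2] = 0$.) For the second, a $2$-adic valuation count on the duplication formula
\[ x(2P) = \Bigl(\tfrac{3x^{2}-42px}{2y}\Bigr)^{2} + 21p - 2x \]
(using $t$ even and $\gcd(r,t)=1$, which force $r,s$ odd) shows the denominator of $x(2P)$ in lowest terms is again even. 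The relation $s_i^{2} = r_i^{3} - 21p r_i^{2} t_i^{2} + 972p^{3} t_i^{6}$ reduces modulo $\ell \in \{3,p\}$ to $s_i^{2} \equiv r_i^{3}$, so $\ell \mid s_i \Leftrightarrow \ell \mid r_i$; combined with the cuspidal reduction $\widetilde{E}_p : y^{2}=x^{3}$ at each of these primes (whose smooth locus is $\mathbb{G}_a$, on which $[2]$ is an automorphism since $\ell$ is odd), the condition $\gcd(s_i,3p)=1$ propagates inductively. Theorem~\ref{mainthm} applied to $P_i$ then yields an unramified quadratic extension of $K_i$, so $2 \mid h(K_i)$.

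The main obstacle is the distinctness of the $K_i$. Writing $m_i = r_i - 21p t_i^{2}$, the equality $K_i = K_j$ is equivalent to $m_i m_j \in (\mathbb{Q}^{*})^{2} \cup 3p (\mathbb{Q}^{*})^{2}$. Using $m_i \equiv s_i^{2} - 972p^{3} t_i^{6} \pmod{(\mathbb{Q}^{*})^{2}}$ and $972p^{3} = (18p)^{2} \cdot 3p$, this amounts to $x(P_i) - 21p$ lying in a fixed coset of $\{1,3p\} \subset \mathbb{Q}^{*}/(\mathbb{Q}^{*})^{2}$. For any fixed nonzero $c \in \mathbb{Q}$, the set of $P \in E_p(\mathbb{Q})$ with $x(P) - 21p = cu^{2}$ for some $u \in \mathbb{Q}$ is parametrized by rational points on
\[ C_c : Y^{2} = c^{3} u^{6} + 42p c^{2} u^{4} + 441 p^{2} c\, u^{2} + 972 p^{3}, \]
obtained from $y^{2}=f(x)$ by the substitution $x = 21p + cu^{2}$. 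The sextic equals $g(cu^{2})$ with $g(w) = w^{3} + 42p w^{2} + 441 p^{2} w + 972 p^{3}$ the translated Weierstrass cubic of $E_p$; since $g$ has three distinct nonzero roots $-27p, -12p, -3p$, the sextic has six distinct roots and $C_c$ is a smooth curve of genus $2$. Faltings' theorem then forces $|C_c(\mathbb{Q})| < \infty$, so each square class contains only finitely many of the $P_i$ and infinitely many of the $K_i$ are distinct, yielding the desired infinite family of bi-quadratic fields with $2 \mid h(K_i)$.
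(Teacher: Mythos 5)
Your proof is correct, and its core --- showing that ``$t$ even'' and ``$\gcd(s,3p)=1$'' propagate under doubling, then invoking Lemma~\ref{lem3} (via Theorem~\ref{mainthm}) for each iterate $P_i=2^iP_0$ --- is essentially the paper's argument. The paper establishes the $2$-adic step by writing $x(2P)$ as an explicit fraction with odd numerator over $4s_0^2t_0^2$ rather than by your valuation count on $\lambda^2+21p-2x$, and it handles the $3p$-condition by the same congruence $s^2\equiv r^3 \pmod{3,p}$ that you use (your reformulation through the cuspidal reduction $\widetilde{E}_p:y^2=x^3$ and the subgroup $E_0(\mathbb{Q}_\ell)$ is a cleaner way to say the same thing); your Nagell--Lutz justification that $P_0$ is non-torsion matches the paper's remark that $t_0$ even forces rank at least $1$, given the torsion computation of Section~3. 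Where you genuinely go beyond the paper is the last step: the paper's proof ends with the inductive construction and never addresses whether the fields $K_i$ are pairwise distinct, so as written it produces an infinite \emph{sequence} of biquadratic fields with even class number rather than an infinite family of distinct ones. Your reduction of $K_i=K_j$ to a square-class condition on $x(P_i)-21p$, together with the observation that each square class $c$ pulls back to the hyperelliptic curve $Y^2=g(cu^2)$ of genus $2$ (the sextic is separable because $g$ has the three distinct nonzero roots $-27p,-12p,-3p$), lets Faltings' theorem cap the number of $P_i$ in each class and closes this gap. The cost is importing Faltings, which is far heavier machinery than anything else in the paper; the benefit is that your version actually delivers infinitely many distinct fields, which is arguably what the corollary asserts.
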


\section{Unramified abelian extension}

\noindent In this section, we construct an unramified quadratic extension of a bi-quadratic field generated from the elliptic curve $E: y^{2} = (x - a)(x + b)(x- c)$ such that each of $a,b$, and $c$ are positive and $a b + b c = a c$. We start with the following lemma, which introduces an element in the bi-quadratic field of our interest.
\begin{lem}\label{lem1}
    Let $\big(\frac{r}{t^{2}}, \frac{s}{t^{3}}\big)$ be an arbitrary point on $E: y^{2} = (x - a)(x + b)(x- c)$ defined above. Let $K$ denotes the bi-quadratic field $\mathbb{Q}(\sqrt{a b c}, \sqrt{r+(b - a - c)t^{2}})$. If $\alpha = s + t^{3} \sqrt{a b c}  \in K$, then $\alpha \mathcal{O}_{K} = \mathfrak{a}^{2}$ for some ideal $ \mathfrak{a} \subset \mathcal{O}_{K}$ whenever $\gcd(s, a b c) = 1$ and $t$ is even.  
\end{lem}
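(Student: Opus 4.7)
The plan is to use the curve equation to exhibit an algebraic identity of the form $\alpha\bar\alpha = (r\beta)^2$ in $K$, where $\bar\alpha = s - t^3\sqrt{abc}$ is the Galois conjugate of $\alpha$ over $\mathbb{Q}(\sqrt{r+(b-a-c)t^2})$ and $\beta = \sqrt{r+(b-a-c)t^2}\in K$, and then to show that the ideals $\alpha\mathcal{O}_K$ and $\bar\alpha\mathcal{O}_K$ are coprime. Once coprimality is in place, each of the two ideals must be a perfect square.

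The identity comes from expanding $s^2 = (r-at^2)(r+bt^2)(r-ct^2)$ and using the hypothesis $ab+bc=ac$ to kill the coefficient of $rt^4$: this produces
\[
s^2 - abc\,t^6 \;=\; r^2\bigl(r+(b-a-c)t^2\bigr).
\]
The left-hand side is $\alpha\bar\alpha$, the right-hand side is $(r\beta)^2$, and so the identity is immediate.

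For coprimality, any prime $\mathfrak{p}$ of $\mathcal{O}_K$ that divides both $\alpha$ and $\bar\alpha$ must divide their sum $2s$ and their difference $2t^3\sqrt{abc}$. Let $p$ be the rational prime below $\mathfrak{p}$. If $p$ is odd, then $\mathfrak{p}\mid s$ together with $\mathfrak{p}\mid t\sqrt{abc}$ forces $p$ to divide $s$ and one of $t, abc$, contradicting either $\gcd(s,abc)=1$ (a hypothesis) or $\gcd(s,t)=1$ (by the minimality of the representation $(r/t^2,s/t^3)$). If $p=2$, the hypothesis that $t$ is even enters: minimality then gives $r$ odd, and the curve identity $s^2 = r^3+(b-a-c)r^2t^2+abc\,t^6$ reduces mod $4$ to $s^2\equiv r^3\pmod 4$, forcing $s$ odd. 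Since $2\mid t$, we have $\alpha\equiv s\pmod{2\mathcal{O}_K}$, so $\mathfrak{p}\mid\alpha$ would give $s\in\mathfrak{p}\cap\mathbb{Z}=2\mathbb{Z}$, contradicting $s$ odd.

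With coprimality established, the conclusion is a prime-by-prime valuation count: for each prime $\mathfrak{p}$ of $\mathcal{O}_K$, either $v_\mathfrak{p}(\alpha)=0$, or $\mathfrak{p}\nmid\bar\alpha$, in which case $v_\mathfrak{p}(\alpha) = v_\mathfrak{p}(\alpha\bar\alpha) = 2v_\mathfrak{p}(r\beta)$. Either way $v_\mathfrak{p}(\alpha)$ is even, so $\alpha\mathcal{O}_K = \mathfrak{a}^2$ for some integral ideal $\mathfrak{a}$. The main obstacle in this outline is the prime $p=2$: without the hypothesis that $t$ is even, $\alpha$ and $\bar\alpha$ could share primes above $2$ and the ideal $\alpha\mathcal{O}_K$ could fail to be a square. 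The even-$t$ assumption is precisely what makes $\alpha$ a $2$-adic unit and removes this obstruction cleanly.
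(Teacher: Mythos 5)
Your proposal is correct and follows essentially the same route as the paper: both exploit the identity $\alpha\bar\alpha = s^2 - abc\,t^6 = r^2\bigl(r+(b-a-c)t^2\bigr)$, which is a square of an element of $K$, prove that $\alpha$ and $\bar\alpha$ generate coprime ideals using $\gcd(s,abc)=\gcd(s,t)=1$ and the parity forced by $t$ even, and conclude by an even-valuation count. The only differences are cosmetic (you split the coprimality check via the sum $2s$ and difference $2t^3\sqrt{abc}$ and treat $p=2$ separately, while the paper works with the sum and the product), so nothing further is needed.
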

\begin{proof}
\noindent  We first notice that $P = \big(\frac{r}{t^{2}}, \frac{s}{t^{3}}\big) \in E(\mathbb{Q})$ implies $s^{2} = r^{3} + (b - a - c)r^{2}t^{2} + a b c t^{6}$. Assuming $\bar{\alpha} = s - t^{3} \sqrt{a b c}$, this in return shows that 
$$N_{K/ \mathbb{Q}}(\alpha) = \alpha^{2} {\bar{\alpha}}^{2} = (s^{2} - a b c t^{6})^{2} = (r\sqrt{r+(b - a - c)} t^{2})^{4}.$$
Without loss of generality, one can choose $\gcd(r,t) = \gcd(s,t) = 1$. Hence, if $t$ is even, both $r$ and $s$ must be odd. Now for any prime ideal $\wp \subset \mathcal{O}_{K}$, if $\alpha, \bar{\alpha} \in \wp$, then $\alpha + \bar{\alpha} = 2s, \alpha \cdot \bar{\alpha} = r^{3}+(b - a - c)r^{2}t^{2} \in \wp$. This implies that $2 \not \in \wp$ as $r^{3}+(b - a - c) r^{2}t^{2}$ is odd. Now, $s, r^{3}+(b - a - c)r^{2}t^{2} \in \wp$ implies that $a b c t^{6} \in \wp$ which is again a contradiction as $\gcd(s, a b c) = \gcd(s,t) = 1$.\\
As $\gcd(\alpha, \bar{\alpha}) = 1$, we can now conclude that $\alpha^{2} \mathcal{O}_{K} = \mathfrak{a}^{4}$ now for some ideal $\mathfrak{a} \subset \mathcal{O}_{K}$. This concludes the proof. 
\end{proof}
\noindent We conclude this section with the following result concerning the ramification of primes in a quadratic extension of $K$. 

\begin{lem}\label{lem3}
    Let $\big(\frac{r}{t^{2}}, \frac{s}{t^{3}}\big)$ be an arbitrary point on $E: y^{2} = (x - a)(x + b)(x- c)$ defined above. Let $K$ denotes the bi-quadratic field $\mathbb{Q}(\sqrt{a b c}, \sqrt{r+(b - a - c)t^{2}})$. If $\alpha = s + t^{3} \sqrt{a b c} \in K$, then $K(\sqrt{\alpha})/K$ is an unramified abelian extension if $t$ is even, $\gcd(s, a b c) = 1$, and $K$ is either imaginary or $3$ ramifies in $K/ \mathbb{Q}$.
\end{lem}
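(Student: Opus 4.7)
The plan is to verify that $K(\sqrt{\alpha})/K$ is unramified at every place of $K$; since the extension has degree at most $2$, it is automatically abelian, so only the unramifiedness has to be shown. The central input will be Lemma~\ref{lem1}: $\alpha\mathcal{O}_{K} = \mathfrak{a}^{2}$, which forces $v_{\mathfrak{p}}(\alpha)$ to be even at every finite prime $\mathfrak{p}$ of $K$. For $\mathfrak{p}$ of odd residue characteristic this is already enough: writing $\alpha = \pi_{\mathfrak{p}}^{2k}u$ with $u \in \mathcal{O}_{K,\mathfrak{p}}^{\times}$ and applying Hensel's lemma, $K_{\mathfrak{p}}(\sqrt{\alpha}) = K_{\mathfrak{p}}(\sqrt{u})$ is either trivial or the unramified quadratic extension of $K_{\mathfrak{p}}$.

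For primes $\mathfrak{p}$ above $2$, I would first show that $\alpha$ is itself a $\mathfrak{p}$-adic unit. Indeed, if $\mathfrak{p} \mid 2$ and $\mathfrak{p} \mid \alpha$, then $\mathfrak{p} \mid 2s - \alpha = \bar{\alpha}$ as well, contradicting the coprimality of $\alpha$ and $\bar{\alpha}$ established in the proof of Lemma~\ref{lem1}. Exploiting that $t$ is even and writing $t = 2t'$, I would then record
\[
\alpha \;=\; s + 8(t')^{3}\sqrt{abc} \;\equiv\; s \pmod{8\mathcal{O}_{K}},
\]
so the local square class of $\alpha$ at $\mathfrak{p}$ is pinned down by the odd integer $s$ modulo $8$. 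The extension $K_{\mathfrak{p}}(\sqrt{\alpha})/K_{\mathfrak{p}}$ is unramified iff $\alpha \equiv \xi^{2} \pmod{4\mathcal{O}_{K,\mathfrak{p}}}$ for some local unit $\xi$; the hypothesis that $3$ ramifies in $K/\mathbb{Q}$ is used here to ensure this congruence, by supplying additional $2$-adic square roots from the relevant quadratic subfield of $K$.

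For the archimedean places, the disjunctive hypothesis takes over. If $K$ is totally imaginary, there is no archimedean ramification to verify. Otherwise, I would control the sign of $\alpha$ at each real embedding using $\alpha\bar{\alpha} = r^{2}(r+(b-a-c)t^{2})$, which is a nonnegative square in $K$, to conclude positivity. Combining the three local checks yields the desired unramified abelian extension $K(\sqrt{\alpha})/K$. I expect the principal obstacle to be the $2$-adic square-class analysis: explicitly working out $\mathcal{O}_{K,\mathfrak{p}}/4\mathcal{O}_{K,\mathfrak{p}}$ in the biquadratic local ring, and verifying that the hypothesis on $K$ really does push the residue of $s$ into an unramified square class in every case. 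Once that local step is secured, the remainder of the argument is routine.
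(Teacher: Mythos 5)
Your overall skeleton matches the paper's: odd finite primes are killed by Lemma \ref{lem1} (the ideal $\alpha\mathcal{O}_K=\mathfrak{a}^2$ forces even valuations, hence at worst unramified quadratic locally), the primes above $2$ are handled via the congruence $\alpha\equiv s\pmod{8\mathcal{O}_K}$ coming from $t$ even, and the archimedean places via the imaginary/real dichotomy. However, there is a genuine gap at exactly the step you flag as the principal obstacle. You propose that when $s\equiv 3\pmod 4$ the hypothesis ``$3$ ramifies in $K/\mathbb{Q}$'' rescues the $2$-adic square class ``by supplying additional $2$-adic square roots from the relevant quadratic subfield of $K$.'' That mechanism does not work: ramification of $3$ is a statement about the primes of $K$ above $3$ and carries no information about $\mathcal{O}_{K,\mathfrak{p}}^{\times}/(\mathcal{O}_{K,\mathfrak{p}}^{\times})^2$ for $\mathfrak{p}\mid 2$ (e.g.\ whether $3$ is a square in $K_\mathfrak{p}$ is a condition mod $8$, unrelated to the splitting of $3$). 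The paper's actual device is global, not local: if $\alpha\equiv 3\pmod 4$, replace $\alpha$ by $-\alpha$ when $K$ is imaginary, or by $3\alpha$ when $K$ is real. Then $-\alpha$ (resp.\ $3\alpha\equiv 3s\equiv 9\equiv 1\pmod 4$) lies in the trivial square class mod $4$ at the primes above $2$, and the principal ideal generated by it is still a square of an ideal --- for $3\alpha$ this is precisely where the ramification of $3$ enters, since in the Galois extension $K/\mathbb{Q}$ ramification forces $3\mathcal{O}_K$ to be a square of an ideal. (Strictly this proves that $K(\sqrt{\epsilon\alpha})/K$ is unramified for a suitable $\epsilon\in\{1,-1,3\}$, which is what the application in Theorem \ref{mainthm} needs.)

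A secondary, smaller issue: at the real places you cannot ``conclude positivity'' from $\alpha\bar{\alpha}=r^2\bigl(r+(b-a-c)t^2\bigr)\ge 0$ alone; that identity only shows the two conjugate values $s\pm t^3\sqrt{abc}$ share the sign of $s$, so $\alpha$ is totally negative whenever $s<0$. One must either normalize $s>0$ (replacing the point by its negative) or fold the sign into the same $\epsilon$-adjustment as above.
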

\begin{proof}
    The extension $K(\sqrt{\alpha})/K$ is quadratic, hence abelian. For the ramification of primes, we first notice that $\alpha \mathcal{O}_{K} = \mathfrak{a}^{2}$ for an ideal $\mathfrak{a}$ implies that there are no finite primes except possible primes above $2$ that can ramify in $K(\sqrt{\alpha})/K$.\\ For the ramification of primes above $2$, we first notice that $t$ even implies that $s$ is odd, hence $\alpha \equiv s \equiv 1,3 \pmod 4$. If $s \equiv 1 \pmod 4$, no primes above $2$ ramify in $K(\sqrt{\alpha})/K$. If $\alpha \equiv s \equiv 3 \pmod 4$ and $K$ is imaginary, replacing $\alpha$ by $-\alpha$ again make the extension $K(\sqrt{\alpha})/K$ unramified for all primes including infinite primes. In case, $K$ is real and $\alpha \equiv s \equiv 3 \pmod 4$, because $3$ ramifies in $K/ \mathbb{Q}$, replacing $\alpha$ by $3 \alpha$ make the extension $K(\sqrt{\alpha})/K$ unramified for all primes. This concludes the proof.  
\end{proof}
\section{The $2$-descent method}
\noindent We first recall the $2$-descent method to find the $2$-Selmer group for the elliptic curve 
\begin{equation}\label{eq1}
E_{p}: \;\;y^{2} = (x+6p)(x - 9p) (x - 18p),
\end{equation}
where $p$ is any prime. The discriminant of the elliptic curve $E_{p}$ can be observed to be $16 \cdot 2^{6} \cdot 3^{8} \cdot 5^{2} \cdot p^{6}$. Reducing $E_{p}$ modulo $11$ and modulo $13$ one can immediately observe that $|(\Tilde{E}_{p})(\mathbb{F}_{11})| = 12$ and $|(\Tilde{E}_{p})(\mathbb{F}_{13})| = 8 \text{ or } 20$. Since $E[2] \subset E_{tors}(\mathbb{Q})$ and $E_{tors}(\mathbb{Q})$ injects into both $\Tilde{E}(\mathbb{F}_{11})$ and $\Tilde{E}(\mathbb{F}_{13})$ for any elliptic curve $E$, one can see that $(E_{p})_{tors}(\mathbb{Q}) \cong \mathbb{Z}/ 2 \mathbb{Z} \times \mathbb{Z}/ 2 \mathbb{Z}$. Let $S$ be the set consisting of all finite places at which $E_{p}$ has a bad reduction, the infinite places, and the prime $2$, i.e., $S=\{p, \,2, \,3, \,5, \, \infty\}$. We define
\begin{align}
\label{pairs}
\mathbb{Q}(S,2)&=\left\{b\in\mathbb{Q}^*/(\mathbb{Q}^*)^2 :
\text{ord}_l(b)\equiv 0 ~(\bmod \text{ } {2}) ~ \text{for all primes} ~ l\not \in S\right\}\\
&=\left\{\pm 1,\; \pm 2, \;\pm 3, \;\pm 5, \;\pm p, \;\pm 6, \;\pm 10, \;\pm 2p, \;\pm 15, \;\pm 3p, \;\pm 5p, \;\pm 30, \;\pm 6p, \;\pm 10p, \;\pm 15p, \;\pm 30p \right\}.\nonumber
\end{align}
By the method of 2-descent (see \cite{Silverman}, Proposition X.$1.4$), there exists an injective homomorphism
$$ \phi: E_{p}(\mathbb{Q})/2E_{p}({\mathbb{Q}}) \longrightarrow \mathbb{Q}(S,2) \times \mathbb{Q}(S,2)$$
 defined by 
\begin{align*}
\phi(x,y) = \begin{cases}
(x + 6p, x - 9p)  & \text{if} \text{  } x\neq -6p, 9p, \\
(10, -15p) & \text{if} \text{  } x = -6p,\\
(15p,-15) & \text{if}\text{  } x = 9p, \\
(1,1) &\text{if} \text{  } x = \infty, \text{ i.e., if } (x,y) = O,
\end{cases}
\end{align*}
where $O$ is the fixed base point (also identified as the {\it point of infinity} $[0,1,0]$ in the projective plane) that acts as the identity element in the group $E_{p}(\mathbb{Q})$. 
Moreover, if $(b_1, b_2) \in \mathbb{Q}(S,2) \times \mathbb{Q}(S,2)$ is a pair that is not in the image of one of the three points $O, (-6p,0), (9p,0)$, then $(b_1,b_2)$ is the image of a point $ P = (x,y) \in E_{p}(\mathbb{Q})/2E_{p}(\mathbb{Q})$ if and only if the equations
\begin{align}
& b_1z_1^2 - b_2z_2^2 = 15p, \label{eq2}\\
& b_1z_1^2 - b_1b_2z_3^2 = 24p, \label{eq3}
\end{align}
have a solution $(z_1,z_2,z_3) \in \mathbb{Q}^* \times \mathbb{Q}^* \times \mathbb{Q}$. These equations represent a finite set of smooth curves, each isomorphic to $E_{p}$ over $\bar{\mathbb{Q}}$ together with a simply transitive algebraic group action of $E_{p}$ on each of them over $\mathbb{Q}$. These smooth curves are called \textit{homogeneous space} of $E_{p}$ defined over $\mathbb{Q}$. Computing $E_{p}(\mathbb{Q})/2E_{p}(\mathbb{Q})$ boils down to determining the existence of $\mathbb{Q}$-rational points in these spaces. Furthermore, the general failure of the Hasse-Minkowski principle motivates the definition of the Selmer group, which studies the \textit{adelic} points on the homogeneous spaces (see Remark X$.1.2$, X$.4.9$, and Section $3$ of chapter X in \cite{Silverman}).\\ 
\noindent The image of $E_{p}(\mathbb{Q})/2E_{p}({\mathbb{Q}})$ under the $2$-descent map is contained in a subgroup of $\mathbb{Q}(S,2)\times \mathbb{Q}(S,2)$ known as the \textit{$2$-Selmer group} $\mbox{Sel}_2(E_{p}/\mathbb{Q})$, which fits into an exact sequence (see Chapter X, \cite{Silverman}, Theorem X$.4.2$)
\begin{equation}\label{eq4}
0 \longrightarrow E_{p} (\mathbb{Q})/2 E_{p}(\mathbb{Q}) \longrightarrow \mbox{Sel}_2 (E_{p}/\mathbb{Q}) \longrightarrow \Sha(E_{p}/\mathbb{Q})[2]\longrightarrow 0.
\end{equation}
The elements in $\mbox{Sel}_2(E/\mathbb{Q})$ for any elliptic curve $E$ correspond to the pairs $(b_{1},b_{2})\in \mathbb{Q}(S,2) \times \mathbb{Q}(S,2)$ such that the system of equations (\ref{eq2}) and (\ref{eq3}) has non-trivial local solutions in $\mathbb{Q}_{l}$ at all primes $l$ of $\mathbb{Q}$ including infinity. Note that $\# \,E_{p}(\mathbb{Q})/2 E_{p}(\mathbb{Q})=2^{2+r(E_{p})}$. It is customary to write $\#  \,\mbox{Sel}_2 (E_{p}/\mathbb{Q}) = 2^{2+s(E_{p})}$, and refer to $s(E_{p})$ as the $2$-Selmer rank. We have
\begin{equation}\label{eq5}
0 \leq r(E_{p}) \leq  s(E_{p}).
\end{equation}
From the inequality above as well as the exact sequence (\ref{eq4}), one can say that the $2$-Selmer group provides information about both $E_{p}(\mathbb{Q})$ and $\Sha(E_{p}/\mathbb{Q})[2]$.  
\section{Local solutions of the homogeneous space}
\noindent In this section, we examine the properties of the $l$-adic solutions for (\ref{eq2}) and (\ref{eq3}) that are associated with the $2$-Selmer group. We use these properties later to bound the size of the $2$-Selmer group. Any $l$-adic number $a$ can be written as $a = l^{n} \cdot u \text{ where } n \in \mathbb{Z}$, $u \in \mathbb{Z}_{l}^{*}$. Notice that the $l$-adic valuation $v_{l}(a)$ of $a$ is just $n$. We first prove the following result for all primes $l \neq 5$.  
\begin{lem}\label{lem4}
Suppose (\ref{eq2}) and (\ref{eq3}) have a solution $(z_1,z_2,z_3) \in \mathbb{Q}_{l} \times \mathbb{Q}_{l} \times \mathbb{Q}_{l}$ for any odd prime $l \neq 5$. If $v_{l}(z_{i}) < 0$ for any one $i \in \{1,2,3\}$, then $v_{l}(z_{1})=v_l(z_2)=v_l(z_3)=-k < 0$ for some integer $k$. 
\end{lem}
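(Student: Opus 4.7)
My plan is to carry out a direct $l$-adic valuation analysis of the two homogeneous-space equations (\ref{eq2}) and (\ref{eq3}). Let $\alpha = v_l(z_1)$, $\beta = v_l(z_2)$, $\gamma = v_l(z_3)$, and $\epsilon_i = v_l(b_i) \in \{0,1\}$, the latter because each $b_i \in \mathbb{Q}(S,2)$ is a squarefree representative. The crucial arithmetic input is that for every odd prime $l \neq 5$ one has $v_l(15p) = v_l(24p)$: both equal $0$ when $l \notin \{3, p\}$, both equal $1$ when $l \in \{3, p\}$ with $p > 3$, and both equal $2$ in the degenerate case $l = p = 3$. Writing $c := v_l(15p) = v_l(24p)$, this common value is what powers the argument, and its failure at $l = 5$ (where $v_5(15p) = 1$ but $v_5(24p) = 0$) is precisely why that prime must be excluded from the hypothesis.

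The first step is to reduce everything to the case $\alpha < 0$. If $\beta < 0$ while $\alpha \geq 0$, then in (\ref{eq2}) we have $v_l(b_2 z_2^2) = \epsilon_2 + 2\beta \leq -1$ and $v_l(b_1 z_1^2) \geq 0$, so by the ultrametric property the left-hand side has valuation $\epsilon_2 + 2\beta < c$, contradicting (\ref{eq2}). An analogous calculation with (\ref{eq3}) handles $\gamma < 0$: in each sub-case of $(\epsilon_1, \epsilon_2)$, the assumption $\alpha \geq 0$ forces $v_l(b_1 z_1^2)$ strictly above $v_l(b_1 b_2 z_3^2) = \epsilon_1 + \epsilon_2 + 2\gamma$, and the resulting minimum is too negative to equal $c$.

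Once $\alpha = -a \leq -1$ is in hand, the key identity is $v_l(b_1 z_1^2) = \epsilon_1 - 2a \leq -1 < c$, so the ultrametric property applied to (\ref{eq2}) forces $v_l(b_1 z_1^2) = v_l(b_2 z_2^2)$, i.e.\ $\epsilon_1 - 2a = \epsilon_2 + 2\beta$; matching parity yields $\epsilon_1 = \epsilon_2$ and $\beta = -a$. Applying the same device to (\ref{eq3}) gives $\epsilon_1 - 2a = \epsilon_1 + \epsilon_2 + 2\gamma$, whose parity forces $\epsilon_2 = 0$, hence $\epsilon_1 = 0$ and $\gamma = -a$. Setting $k = a$ completes the argument. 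The main bookkeeping obstacle will be the case $l \in \{3, p\}$, where both $\epsilon_i$ can a priori equal $1$ and the right-hand side has positive valuation; organizing everything through the single invariant $c$ and extracting the parity constraints $\epsilon_1 = \epsilon_2$ and then $\epsilon_2 = 0$ from the two equations respectively is what keeps the argument from splintering into many cases.
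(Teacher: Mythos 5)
Your proposal is correct and follows essentially the same route as the paper: both arguments compare the $l$-adic valuations of the three terms in (\ref{eq2}) and (\ref{eq3}) and exploit the squarefreeness of $b_1,b_2$ together with the coincidence $v_l(15p)=v_l(24p)$ for odd $l\neq 5$, your ultrametric-plus-parity packaging being a tidier version of the paper's explicit case split on the exponents $k_1,k_2,k_3$. One phrase needs repair: in the reduction step with $\gamma<0$, $\alpha\geq 0$ and $\epsilon_1=\epsilon_2=1$, $\gamma=-1$, the minimum is $0$, which is not ``too negative''; the correct observation is that $\epsilon_1=\epsilon_2=1$ forces $l\in\{3,p\}$, hence $c\geq 1>0$, so the valuation still cannot match.
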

\begin{proof}
Let $z_{i} = l^{k_{i}}u_{i}$, where $k_{i} \in \mathbb{Z}$ and $u_{i} \in \mathbb{Z}_{l}^{*}$ for $i \in \{1,2,3\}$. Then $v_{l}(z_{i}) = k_{i}$ for all $i \in\{1,2,3\}$. \\
\noindent Suppose $k_{1} < 0$. Then from (\ref{eq2}) one can get that $$b_{1}u_{1}^{2} - b_{2}u_{2}^{2}l^{2(k_{2}-k_{1})} = 15pl^{-2k_{1}}.$$ If $k_{2}>k_1$, then $l^{2}$ must divide $b_{1}$, a contradiction as $b_{1}$ is square-free. Hence $k_{2} \leq k_1<0$. Now if  $k_{2} < k_{1} < 0$ then again from (\ref{eq2}) we get $$b_{1}u_{1}^{2}l^{2(k_{1}-k_{2})} - b_{2}u_{2}^{2} = 15pl^{-2k_{2}},$$ which implies $l^{2}$ must divide $b_{2}$, a contradiction again. Hence if $k_{1} < 0$, then we have $k_{1} = k_{2} = -k < 0$ for some integer $k$. For $k_2<0$, one similarly gets $k_1=k_2=-k<0$.\\
From (\ref{eq3}), we have $$b_{1}u_{1}^{2} - b_{1}b_{2}u_{3}^{2}l^{2(k_{3} - k_{1})} = 24pl^{-2k_{1}}.$$ If $k_1<0$ and $k_3>k_1$, then $l^{2}$ must divide $b_{1}$, a contradiction as before. Hence $k_3\leq k_1<0$ if $k_1<0$. For $k_3 < k_{1} <0$, 
we can rewrite the above equation as 
\begin{equation}
\label{new1}
b_{1}u_{1}^{2}l^{2(k_{1} - k_{3})} - b_{1}b_{2}u_{3}^{2} = 24pl^{-2k_{3}},
\end{equation} which implies $l^{2}$ must divide $b_{1}b_{2}$ i.e. $l$ divides both $b_{1}$ and $b_{2}$. If $l = 2,3$ or $p$, then from (\ref{new1}) we arrive at the contradiction that $l^{3}$ divides $b_{1}b_{2}$ whereas $b_1$ and $b_2$ are square-free. Hence $k_{1} < 0$ implies $k_{3} = k_{1} < 0$. Together, we obtain $k_1=k_2=k_3=-k<0$ for some integer $k$ if $k_1<0$.\\
Now if $k_{2} < 0$ but $k_{1} \geq 0$, then from \ref{eq2}, we get $$b_{1}u_{1}^{2}l^{2(k_{1}-k_{2})} - b_{2}u_{2}^{2} = 15pl^{-2k_{2}} \implies b_{2} \equiv 0 \pmod {l^{2}},$$ a contradiction. Hence $k_{2} < 0 \implies k_{1} < 0$ and we are back to the first case which implies $k_{1} = k_{2} = k_{3} = -k$ for some integer $k \geq 0$.\\
Similarly if $k_{3} < 0$ but $k_{1} \geq 0$, we get from \ref{eq3}, for $l\neq 5$, $$b_{1}u_{1}^{2}l^{2(k_{1} - k_{3})} - b_{1}b_{2}u_{3}^{2} = 24pl^{-2k_{3}} \implies l^{3} \text{ divides } b_{1}b_{2}, \text{ a contradiction }.$$ So again we observe that $k_{3} < 0$ implies $k_{1} < 0$ and hence $k_{1} = k_{2} = k_{3} = -k$ for some positive integer $k$ again. This concludes the proof.  
\end{proof}
\noindent An immediate observation from the proof above is that the conclusion of Lemma \ref{lem4} stays true even for the case $l=5$ as long as $b_{1}b_{2} \not \equiv 0 \pmod {25}$. We conclude this section with the following lemma for the case $l=5$.
\begin{lem}\label{lem4.1}
    Suppose $5$ divides $\gcd(b_{1},b_{2})$ and the corresponding equations (\ref{eq2}) and (\ref{eq3}) have a solution $(z_1,z_2,z_3) \in \mathbb{Q}_{5} \times \mathbb{Q}_{5} \times \mathbb{Q}_{5}$. If $v_{5}(z_{i}) < 0$ for any one $i \in \{1,2\}$, then $v_{5}(z_{1})=v_5(z_2)=v_5(z_3)=-k < 0$ for some integer $k$. If $v_{5}(z_{3}) < 0$ but $v_{5}(z_{1}) \neq v_{5}(z_{3})$, then $v_{5}(z_{3}) = -1$ and $v_{5}(z_{1}) \geq 0$.
\end{lem}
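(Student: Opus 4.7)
The plan is to carry out a $5$-adic valuation analysis on equations (\ref{eq2}) and (\ref{eq3}) analogous to Lemma \ref{lem4}, exploiting the fact that $5 \mid \gcd(b_1,b_2)$ forces exactly one factor of $5$ into each of $b_1,b_2$ (they are squarefree) and hence exactly two into $b_1 b_2$. Writing $b_i = 5 b_i'$ with $b_i'$ squarefree and coprime to $5$, and $z_i = 5^{k_i} u_i$ with $u_i \in \mathbb{Z}_5^\times$ and $k_i = v_5(z_i)$, substituting into (\ref{eq3}) yields
\begin{equation*}
5^{2k_1+1} b_1' u_1^2 - 5^{2k_3+2} b_1' b_2' u_3^2 = 24p,
\end{equation*}
whose right-hand side has $v_5 = 0$ since $p \neq 5$. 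The key observation is that the two summands on the left have $5$-adic valuations $2k_1+1$ (odd) and $2k_3+2$ (even), hence distinct, so by the ultrametric property $v_5(\text{LHS}) = \min(2k_1+1, 2k_3+2)$. Setting this equal to $0$ and noting that no odd integer vanishes, I am forced into $2k_3+2 = 0$ with $2k_1+1 > 0$, i.e., $k_3 = -1$ and $k_1 \geq 0$.

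Rewriting (\ref{eq2}) as $5^{2k_1+1} b_1' u_1^2 - 5^{2k_2+1} b_2' u_2^2 = 15p$ with $v_5(\text{RHS}) = 1$ and $k_1 \geq 0$ already in hand, the same minimum-valuation argument forces $k_2 \geq 0$: otherwise the second summand would dominate with valuation at most $-1$, contradicting $v_5(\text{RHS}) = 1$. Thus every $5$-adic solution satisfies $v_5(z_1), v_5(z_2) \geq 0$ and $v_5(z_3) = -1$.

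Both claims then drop out. For the first, the hypothesis $v_5(z_i) < 0$ for some $i \in \{1,2\}$ is never satisfied, so the implication holds vacuously. For the second, the hypothesis $v_5(z_3) < 0$ with $v_5(z_1) \neq v_5(z_3)$ is met automatically since $v_5(z_3) = -1 < 0 \leq v_5(z_1)$, and the conclusion $v_5(z_3) = -1$, $v_5(z_1) \geq 0$ is exactly what has been established. The main subtlety, and the ingredient absent from Lemma \ref{lem4}, is the parity-flip in (\ref{eq3}) caused by the extra factor of $5$ in $b_1 b_2$ relative to $b_1$; this flip forbids the two summands from ever cancelling to higher order and rigidly pins $v_5(z_3)$ to $-1$.
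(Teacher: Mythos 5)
Your proof is correct, and it in fact establishes more than the lemma asserts. The paper's own argument proceeds by the same style of case analysis as Lemma \ref{lem4}: for the second claim it splits into $k_1 \le k_3 < 0$ (forcing $b_1 \equiv 0 \pmod{25}$, impossible for squarefree $b_1$) and $k_3 < k_1$ with $k_3 \le -2$ (forcing $b_1 b_2 \equiv 0 \pmod{5^3}$), and it leaves the first alternative of the lemma --- all three valuations equal to $-k<0$ --- standing, only to eliminate it later in Lemma \ref{lem7}$(v)$ via the same congruence $b_1\equiv 0\pmod{25}$. Your parity-of-valuations observation, namely that $v_5(b_1 z_1^2)=2k_1+1$ is odd while $v_5(b_1 b_2 z_3^2)=2k_3+2$ is even, so the two terms of (\ref{eq3}) can never share a valuation and the ultrametric minimum must equal $v_5(24p)=0$, collapses all of these cases at once: it pins $v_5(z_3)=-1$ and $v_5(z_1)\ge 0$ for every solution, and then (\ref{eq2}) gives $v_5(z_2)\ge 0$, so the first alternative of the lemma is vacuous. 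This is consistent with, and slightly sharper than, what the paper later needs. Two small points to make explicit: $z_3$ is only required to lie in $\mathbb{Q}_5$ rather than $\mathbb{Q}_5^*$, so you should note that $z_3=0$ is impossible (it would make the left side of (\ref{eq3}) have odd valuation $2k_1+1$ against $v_5(24p)=0$) before writing $z_3=5^{k_3}u_3$; and the hypothesis $p\neq 5$ you invoke is automatic in every application, since the paper takes $p$ in residue classes modulo $120$ coprime to $5$. With those remarks added, the argument is sound and is a cleaner route than the paper's.
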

\begin{proof}
    The first part of the statement follows similarly to the proof of Lemma \ref{lem4}. Hence we only focus on the case $k_{3} = v_{5}(z_{3}) < 0$. Now if $k_{1} = v_{5}(z_{1}) \leq k_{3} < 0$ then that will imply from (\ref{eq3}), $$b_{1}u_{1}^{2} - b_{1}b_{2}u_{3}^{2}5^{2(k_{3} - k_{1})} = 24 \cdot p \cdot 5^{-2k_{1}} \implies b_{1} \equiv 0 \pmod {25},$$ a contradiction. Now if $k_{3} < k_{1}$, and $k_{3} \leq -2$, we again get from (\ref{eq3}), $$b_{1}u_{1}^{2}5^{2(k_{1}-k_{3})} - b_{1}b_{2}u_{3}^{2} = 24 \cdot p \cdot 5^{-2k_{3}} \implies b_{1}b_{2} \equiv 0 \pmod {5^{3}},$$ a contradiction. Hence $k_{3} < 0$ implies $k_{3} = -1$ and $k_{1} > k_{3}$ i.e. $k_{1} \geq 0$. Now the result follows.
\end{proof}

\section{Bounding the size of the $2$-Selmer group of $E_{p}$}

\noindent In this section, we bound the size of the $2$-Selmer group of the elliptic curve $E_{p}: y^{2} = (x+6p)(x-9p)(x-18p)$. The $2$-Selmer group $\mbox{Sel}_2 (E_{p}/\mathbb{Q})$ consists of those pairs $(b_1, b_2)$ in $\mathbb{Q}(S,2) \times \mathbb{Q}(S,2)$ for which equations (\ref{eq2}) and (\ref{eq3}) have an $l$-adic solution at every place $l$ of $\mathbb{Q}$. 
We now limit the size of  $\mbox{Sel}_2 (E_{p}/\mathbb{Q})$ by ruling out local solutions for specific pairs $(b_{1},b_{2})$ by exploiting the results of the previous sections. 
\begin{lem}\label{lem5}
Let $(b_{1},b_{2}) \in \mathbb{Q}(S,2) \times \mathbb{Q}(S,2)$. Then\\
$(i)$ The corresponding homogeneous space will have no $l$-adic solution for the case $l = \infty$ if $b_{1} < 0$.\\
$(ii)$ If $\gcd(b_{1},b_{2}) \equiv 0 \pmod 2$, the corresponding homogeneous space will not have $2$-adic solutions. \\ 
\end{lem}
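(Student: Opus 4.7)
My plan is to derive contradictions directly from the homogeneous-space equations~(\ref{eq2}) and~(\ref{eq3}), using a sign argument for part (i) over $\mathbb{R}=\mathbb{Q}_\infty$, and a $2$-adic valuation comparison for part (ii). In both cases the key input is the precise $2$-adic behavior of the right-hand sides: $v_2(15p)=0$ and $v_2(24p)=3$ for the odd prime $p$.

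For part (i), I would first factor~(\ref{eq3}) as $b_1(z_1^2 - b_2 z_3^2) = 24p > 0$. Since $b_1 < 0$, this forces $z_1^2 - b_2 z_3^2 < 0$; combined with $z_1^2 \geq 0$, this requires $b_2 > 0$. Substituting $b_1 < 0$ and $b_2 > 0$ into the left-hand side of~(\ref{eq2}), both summands $b_1 z_1^2$ and $-b_2 z_2^2$ are non-positive, so the LHS cannot equal $15p > 0$. This rules out all real solutions.

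For part (ii), assume $2 \mid \gcd(b_1,b_2)$. Since $b_1, b_2 \in \mathbb{Q}(S,2)$ are squarefree, I write $b_1 = 2b_1'$, $b_2 = 2b_2'$ with $b_1',b_2'$ odd. For $z_i \in \mathbb{Q}_2^\times$, set $k_i = v_2(z_i)$ and $z_i = 2^{k_i}u_i$ with $u_i \in \mathbb{Z}_2^\times$; then $v_2(b_i z_i^2) = 2k_i + 1$ is odd, while $v_2(b_1 b_2 z_3^2) = 2k_3 + 2$ is even. Analyzing~(\ref{eq2}): the two summands on the LHS have odd $v_2$, so if $k_1 \neq k_2$ the $v_2$ of their difference is odd, contradicting $v_2(15p) = 0$; hence $k_1 = k_2 = k$ and the LHS equals $2^{2k+1}(b_1' u_1^2 - b_2' u_2^2)$. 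Since $b_1'u_1^2$ and $b_2'u_2^2$ are both odd units in $\mathbb{Z}_2$, their difference has $v_2 \geq 1$, so $v_2(\text{LHS}) \geq 2k+2$. Matching with $v_2(15p) = 0$ forces $k \leq -1$, so in particular $k_1 \leq -1$.

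Analyzing~(\ref{eq3}): the two LHS terms have $v_2$ of opposite parity, so no cancellation is possible, and $v_2(\text{LHS}) = \min(2k_1+1,\, 2k_3+2)$. Setting this equal to $v_2(24p) = 3$ and noting that the minimum must equal an odd value forces $2k_1+1 = 3$, i.e., $k_1 = 1$. This contradicts $k_1 \leq -1$ from the previous paragraph, so no $2$-adic solution can exist. The degenerate cases in which some $z_i = 0$ reduce~(\ref{eq2}) or~(\ref{eq3}) to $b_i z_i^2 = c$ with odd $v_2$ on the left and $v_2(c) \in \{0,3\}$, and each is dispatched by the same parity comparison. I do not anticipate a genuine obstacle: once the factorizations $b_i = 2b_i'$ are in place, both parts reduce to routine sign/parity arithmetic with the $2$-adic valuations.
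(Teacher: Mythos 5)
Your proposal is correct. Part (i) is the same sign argument as the paper's (the paper phrases it as: $b_2>0$ makes (\ref{eq2}) read $15p<0$, while $b_2<0$ makes (\ref{eq3}) read $24p<0$; you traverse the two equations in the opposite order, but the content is identical). Part (ii) reaches the same contradiction by a genuinely different, self-contained route: you run a parity-of-valuation analysis on $v_2(b_iz_i^2)=2k_i+1$ versus $v_2(b_1b_2z_3^2)=2k_3+2$, extracting $k_1\le -1$ from (\ref{eq2}) and $k_1=1$ from (\ref{eq3}). The paper instead first invokes Lemma \ref{lem4} to conclude that if any $v_2(z_i)<0$ then all three valuations are equal, substitutes into (\ref{eq3}) to force $4\mid b_1$ (contradicting squarefreeness), and then handles the remaining case $v_2(z_i)\ge 0$ by observing that (\ref{eq2}) would make the odd number $15p$ even. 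Your version buys independence from Lemma \ref{lem4} --- which is worth something here, since that lemma is stated only for odd primes $l\neq 5$ and its application at $l=2$ in the paper is an implicit extension --- at the cost of a slightly longer valuation computation; the paper's version is shorter once Lemma \ref{lem4} is granted. You also correctly dispatch the degenerate case $z_3=0$, which the paper leaves implicit.
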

\begin{proof} $(i)$ The result follows from the observation that $b_{1} < 0$ in equations (\ref{eq2}) and (\ref{eq3}) either implies $15p < 0$ when $b_{2} > 0$ or $24p < 0$ when $b_{2} < 0$.\\
\noindent $(ii)$ Suppose $\gcd(b_{1},b_{2})$ is even. If moreover, $v_{2}(z_{i}) < 0$ for some $i \in \{1,2,3\}$, then from Lemma \ref{lem4}, we can say that $v_{2}(z_{i}) = -k$ for some positive integer $k$ and for all $i \in \{1,2,3\}$. From (\ref{eq3}) we can now get $$b_{1}u_{1}^{2} - b_{1}b_{2}u_{3}^{2} = 24 \cdot p \cdot 2^{2k},$$ where $z_{i} = u_{i} \cdot 2^{-k}, u_{i} \in {\mathbb{Z}_{2}}^{*}$ for all $i \in \{1,2,3\}$. This immediately implies that $b_{1} \equiv 0 \pmod 4$, a contradiction. Hence $v_{2}(z_{i}) \geq 0$ for all $i \in \{1,2,3\}$ which is again a contradiction as then (\ref{eq2}) implies $15p \equiv 0 \pmod 2$. Hence the result follows. 
\end{proof}
\noindent Let $A = \{(1,1), (10, -15p), (15p, -15), (6p,p)\} \subset \mathbb{Q}(S,2) \times \mathbb{Q}(S,2)$ be the image of $(E_{p})_{tors}(\mathbb{Q})$ under the map $\phi$. We notice the fact that it is enough to check the local solutions for homogeneous space corresponding to only one $(b_{1},b_{2})$ for each coset in $(\mathbb{Q}(S,2) \times \mathbb{Q}(S,2)) / A$ as the existence or non-existence of local solutions for one pair $(b_{1},b_{2})$ determines the same for all four members of the coset. This observation leads us to the following lemma.
\begin{lem}\label{lem6}
    $(i)$ The homogeneous space corresponding to $(b_{1}, b_{2})$ can not have $l$-adic solution for all primes $l \leq \infty$ if $b_{2}$ is even.\\
    $(ii)$ Let $\gcd(b_{1},b_{2}) \equiv 0 \pmod 3$. Then $(b_{1}, b_{2}) \equiv (c_{1},c_{2}) \pmod A$ such that $\gcd(c_{1},c_{2})$ is not divisible by either $3$ or $p$.\\
    $(iii)$ Let $\gcd(b_{1},b_{2}) \equiv 0 \pmod p$. Then $(b_{1}, b_{2}) \equiv (c_{1},c_{2}) \pmod A$ such that $\gcd(c_{1},c_{2})$ is not divisible by either $3$ or $p$.
\end{lem}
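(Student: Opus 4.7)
The plan is to handle (i) by a short $2$-adic valuation argument, and (ii) and (iii) by an explicit coset enumeration using the action of $A$ on $\mathbb{Q}(S,2)\times\mathbb{Q}(S,2)$ modulo squares.

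For (i), suppose $b_{2}$ is even. If $b_{1}$ is also even, then $\gcd(b_{1},b_{2})\equiv 0\pmod{2}$ and Lemma~\ref{lem5}(ii) already applies, so I may assume $b_{1}$ is odd. In equation~(\ref{eq2}) the terms $b_{1}z_{1}^{2}$ and $b_{2}z_{2}^{2}$ have $2$-adic valuations $2v_{2}(z_{1})$ and $1+2v_{2}(z_{2})$, which are of opposite parity; hence they cannot cancel, and $v_{2}$ of the left-hand side equals $\min(2v_{2}(z_{1}),\,1+2v_{2}(z_{2}))$. Matching this with $v_{2}(15p)=0$ forces $v_{2}(z_{1})=0$. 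Substituting into~(\ref{eq3}), the term $b_{1}z_{1}^{2}$ is a $2$-adic unit while $b_{1}b_{2}z_{3}^{2}$ has odd valuation $1+2v_{2}(z_{3})$, so by the same parity argument $v_{2}$ of the left-hand side is at most $0$. But $v_{2}(24p)=3$, a contradiction, and no $2$-adic solution exists.

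For (ii) and (iii), I first record how each non-identity element of $A$ acts on the $3$-adic and $p$-adic parts of $(b_{1},b_{2})$ modulo squares. A direct computation gives: $(15p,-15)$ toggles the factor of $3$ in both coordinates and the factor of $p$ only in $b_{1}$; $(10,-15p)$ toggles $3$ only in $b_{2}$ and $p$ only in $b_{2}$; $(6p,p)$ toggles $3$ only in $b_{1}$ and $p$ in both coordinates. For (ii), assume $3\mid\gcd(b_{1},b_{2})$, so the $3$-part of $(b_{1},b_{2})$ modulo $2$ is $(1,1)$. I split into four subcases by the initial $p$-part: if it is $(0,0)$ or $(1,1)$, take $a=(15p,-15)$; if $(0,1)$, take $a=(10,-15p)$; if $(1,0)$, take $a=(6p,p)$. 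A one-line check in each subcase verifies that $(b_{1},b_{2})\cdot a$ has gcd divisible neither by $3$ nor by $p$. Part (iii) is symmetric: starting from $p$-part $(1,1)$, take $a=(6p,p)$ when the $3$-part is $(0,0)$ or $(1,0)$, take $a=(10,-15p)$ when it is $(0,1)$, and note that the subcase $(1,1)$ is already covered by (ii).

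The main obstacle is the combinatorial bookkeeping in (ii) and (iii); once the toggle patterns of the three non-trivial elements of $A$ are tabulated the verification is routine, and the $2$-adic parity step in (i) is clean once one notices that the two summands in~(\ref{eq2}) and~(\ref{eq3}) are forced to have $v_{2}$ of opposite parity.
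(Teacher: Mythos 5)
Your proposal is correct in all three parts. For parts (ii) and (iii) you do essentially what the paper does: the paper simply lists the coset $\{(b_{1},b_{2}), (10b_{1},-15pb_{2}), (15pb_{1},-15b_{2}), (6pb_{1},pb_{2})\}$ and asserts that one of the last two representatives always works; your toggle table and four-way case split is the same argument written out in full (and your subcase assignments all check out, e.g.\ $p$-part $(0,1)$ genuinely defeats $(15p,-15)$ and requires $(6p,p)$ or $(10,-15p)$, which you handle). Where you genuinely diverge is part (i): the paper again uses the coset principle, observing that if $b_{2}$ is even then Lemma~\ref{lem5}(ii) forces $b_{1}$ odd, whence the coset-mate $(6pb_{1},pb_{2})$ has even gcd and is killed by Lemma~\ref{lem5}(ii), so the whole coset is ruled out. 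You instead give a self-contained $2$-adic valuation argument: the two summands of (\ref{eq2}) have valuations of opposite parity, forcing $v_{2}(z_{1})=0$, and then (\ref{eq3}) has left-hand valuation $\min(0,\,1+2v_{2}(z_{3}))\leq 0$ against $v_{2}(24p)=3$. Both are sound; the paper's version is shorter because it reuses Lemma~\ref{lem5}(ii) and the fact that local solvability is constant on cosets of $A$, while yours has the advantage of not depending on that coset-invariance and of directly exhibiting the $2$-adic obstruction for the given pair.
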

\noindent Before proving Lemma \ref{lem6}, we first explain the importance of it in the $2$-descent method here. Once proved, Lemma \ref{lem6} will imply that without loss of generality, we can assume $b_{2}$ is odd and $\gcd(b_{1},b_{2})$ is not divisible by either $3$ or $p$ while looking for local solutions as long as $(b_{1},b_{2})$ satisfies the conditions above. 
\begin{proof}
    $(i)$ Let us suppose $b_{2}$ is even. Then from Lemma \ref{lem5}, we can deduce that $b_{1}$ is odd. But then $(b_{1}, b_{2})$ and $(6pb_{1}, pb_{2})$ belong to the same coset of $A$ and the homogeneous space corresponding to $(6pb_{1}, pb_{2})$ has no $2$-adic solution from Lemma \ref{lem5}. Hence the result follows.\\
    $(ii)$ We prove this by observing that modulo $A$, the coset of $(b_{1},b_{2})$ consists of\\ $\{(b_{1},b_{2}), (10b_{1}, -15pb_{2}), (15pb_{1}, -15b_{2}), (6pb_{1}, pb_{2})\}$. The result is now evident from the fact that one of $(15pb_{1}, -15b_{2}), (6pb_{1}, pb_{2})$ will always satisfy the condition of the statement depending on whether $3$ or $p$ divides $b_{1},b_{2}$ or not.\\
    $(iii)$ The proof is identical to the proof of the previous result.  
\end{proof} 
\noindent We are now left with $256$ points. As mentioned above, from now on we only consider $(b_{1}, b_{2})$ such that $b_{2}$ is odd and $\gcd(b_{1}, b_{2})$ is not divisible by either $3$ or $p$. This leaves us with $64$ possible pairs to look into. The following lemma helps us omit some of those $64$ possibilities.
\begin{lem}\label{lem7}
    Let us assume $p$ is a prime such that $p \equiv 17, 113 \pmod {120}$. Then\\
    $(i)$ The homogeneous space corresponding to $(b_{1},b_{2})$ has no $3$-adic solution if $b_{1} = 6, 15, 3p$ or $30p$.\\
    $(ii)$ The homogeneous space corresponding to $(b_{1},b_{2})$ has no $3$-adic solution if $b_{2} = 3, 15p, -15$ or $-3p$.\\
    $(iii)$  The homogeneous space corresponding to $(b_{1},b_{2})$ has no $p$-adic solution if $b_{1} = 3p, 5p, 6p$ or $10p$.\\
    $(iv)$ The homogeneous space corresponding to $(b_{1},b_{2})$ has no $p$-adic solution if $b_{2} = \pm 3p, \pm 15p$.\\
    $(v)$ Let $5$ divides $\gcd(b_{1},b_{2})$. Then the homogeneous space corresponding to $(b_{1},b_{2})$ has no $5$-adic solution if $\Big(\frac{b_{1}b_{2}/25}{5}\Big) = 1$. 
\end{lem}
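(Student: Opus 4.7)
The plan is to handle each prime $l \in \{3,\,p,\,5\}$ separately, using Lemmas \ref{lem4} and \ref{lem4.1} to bound the $l$-adic valuations of a putative local solution $(z_1,z_2,z_3)$ and then extracting a Legendre-symbol obstruction from the hypothesis $p \equiv 17, 113 \pmod{120}$. This hypothesis unpacks to $p \equiv 1 \pmod 8$, $p \equiv 2 \pmod 3$, and $p \equiv \pm 2 \pmod 5$, giving $\left(\frac{2}{p}\right) = \left(\frac{-1}{p}\right) = 1$ and $\left(\frac{3}{p}\right) = \left(\frac{5}{p}\right) = \left(\frac{p}{5}\right) = -1$; these are the only quadratic-reciprocity facts needed throughout.

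For parts $(i)$ and $(ii)$ we work at $l = 3$: the standing assumption on $\gcd(b_1, b_2)$ forces $3$ to divide exactly one of $b_1,b_2$, so Lemma \ref{lem4} yields $v_3(z_i) \geq 0$ for all $i$. Reducing \eqref{eq2} mod $3$ makes the variable paired with the $3$-coprime coefficient vanish mod $3$; substituting $3y$ for it and dividing \eqref{eq2} by $3$ reduces, after a second reduction modulo $3$ and using $5p \equiv 1 \pmod 3$, to a congruence of the form $z^{2} \equiv 2 \pmod 3$, once one checks that $b_1/3 \equiv 2 \pmod 3$ in every case of $(i)$ and $b_2/3 \equiv 1 \pmod 3$ in every case of $(ii)$. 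Since $2$ is a non-residue mod $3$, both parts are done.

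Parts $(iii)$ and $(iv)$ follow the same template at $l = p$: Lemma \ref{lem4} gives $v_p(z_i) \geq 0$, reduction of \eqref{eq2} mod $p$ pushes one variable into $p\mathbb{Z}_p$, and after rescaling and dividing by $p$ one lands on a congruence $z^{2} \equiv c \pmod p$. For $(iii)$ the constant $c$ runs over $\{5,\,3,\,5/2,\,3/2\}$, each a non-residue by the Legendre table. For $(iv)$ the first-pass value $c \in \{-5,\,5,\,-1,\,1\}$ immediately kills $b_2 = \pm 3p$; for $b_2 = \pm 15p$ one additionally exploits \eqref{eq3}, substituting the resulting $z_1 \in p\mathbb{Z}_p$ to produce a second congruence on $z_3$ whose solvability is governed by $\left(\frac{\pm 10\, b_1}{p}\right) = 1$. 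Since $\left(\frac{\pm 10}{p}\right) = -1$, this reduces to $b_1$ being a non-residue mod $p$, which fails for the $b_1$'s not already eliminated by $(v)$.

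Part $(v)$ is the most intricate because Lemma \ref{lem4.1} forces a case split. Writing $b_i = 5 b_i'$ with $b_i' \in \mathbb{Z}_5^{*}$: the case $v_5(z_i) \geq 0$ for all $i$ fails immediately, since the left-hand side of \eqref{eq3} has $v_5 \geq 1$ while $24 p$ is a $5$-adic unit. The case $v_5(z_1) = v_5(z_2) = v_5(z_3) = -k < 0$ is excluded after rescaling by $5^k$: reducing \eqref{eq3} mod $5$ forces $b_1' u_1^{2} \equiv 0 \pmod 5$, impossible since $u_1 \in \mathbb{Z}_5^{*}$. By Lemma \ref{lem4.1} the only remaining possibility is $v_5(z_3) = -1$ with $v_5(z_1), v_5(z_2) \geq 0$; substituting $z_3 = u_3/5$ into \eqref{eq3} and reducing mod $5$ yields $b_1' b_2'\, u_3^{2} \equiv p \pmod 5$, solvable iff $\left(\frac{p \cdot b_1 b_2/25}{5}\right) = 1$. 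Using $\left(\frac{p}{5}\right) = -1$, this is equivalent to $\left(\frac{b_1 b_2/25}{5}\right) = -1$, the negation of the hypothesis of $(v)$, so the final case is also excluded. The main obstacle is managing this case split cleanly, in particular not overlooking the mixed-valuation branch $v_5(z_3) = -1$ permitted by Lemma \ref{lem4.1}.
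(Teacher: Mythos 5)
Your arguments for parts (i), (ii), (iii) and (v) are correct and are essentially the paper's own proof: eliminate negative valuations via Lemma \ref{lem4} (and Lemma \ref{lem4.1} for part (v)), reduce \eqref{eq2} (resp.\ \eqref{eq3}) modulo the relevant prime, divide once, and extract a Legendre-symbol obstruction from $\big(\tfrac{2}{p}\big)=\big(\tfrac{-1}{p}\big)=1$ and $\big(\tfrac{3}{p}\big)=\big(\tfrac{5}{p}\big)=\big(\tfrac{p}{5}\big)=-1$. (Minor point: the step ``Lemma \ref{lem4} yields $v_l(z_i)\ge 0$'' needs the one-line observation that the all-negative branch of Lemma \ref{lem4} would force $l$ to divide both $b_1$ and $b_2$; the paper records this, you only gesture at it.)

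Part (iv) is where there is a genuine problem, though its root is a typo in the statement itself. The first-pass congruence $z_2^2\equiv -15/(b_2/p)\pmod p$ gives the necessary condition $\big(\tfrac{-b_2/p}{p}\big)=\big(\tfrac{15}{p}\big)=1$, which fails exactly for $b_2=\pm 3p,\pm 5p$ and \emph{holds} for $b_2=\pm p,\pm 15p$ (because $\big(\tfrac{-1}{p}\big)=\big(\tfrac{15}{p}\big)=1$). That is exactly what the paper's proof of (iv) establishes (``i.e.\ $b_{2} = \pm p$ or $\pm 15p$'' are the values passing the test), and Table 1 confirms it: $(6p,15p)$, $(3,15p)$ and $(5p,\mp 15p)$ still appear among the surviving cosets and are only killed in Lemma \ref{lem8} by $5$-adic and $3$-adic arguments. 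So ``$\pm 15p$'' in the statement should evidently read ``$\pm 5p$''. You took the statement literally and tried to dispose of $b_2=\pm 15p$ by a supplementary use of \eqref{eq3}; that argument does not close. The condition you derive, $\big(\tfrac{b_1}{p}\big)=-1$, is satisfied by surviving values of $b_1$ --- most importantly $b_1=3$, since $\big(\tfrac{3}{p}\big)=-1$ --- so $(3,15p)$ is not excluded; and it cannot be, since $(3,15p)$ really does have a $p$-adic point (choose $z_2$ with $z_2^2\equiv-1\pmod p$, possible as $\big(\tfrac{-1}{p}\big)=1$, adjust $z_2$ modulo $p^2$ so that $5(1+z_2^2)/p$ is a nonzero square modulo $p$ to solve for $z_1\in p\mathbb{Z}_p$, and then $z_3^2\equiv -8/15\pmod p$ is solvable because $\big(\tfrac{-8/15}{p}\big)=1$). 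The repair is to prove (iv) with $\pm 5p$ in place of $\pm 15p$ --- your own list of first-pass constants already does this, since $c=\mp 3$ for $b_2=\pm 5p$ is a non-residue --- and to leave the pairs with $b_2=\pm 15p$ to Lemma \ref{lem8}.
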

\begin{proof}
    We start the proof by first noticing the fact $p \equiv 17, 113 \pmod {120}$ implies $\big(\frac{2}{p}\big) = 1$ and $\big(\frac{3}{p}\big) = \big(\frac{5}{p}\big) = -1$. The idea behind each proof is to manipulate this observation and (\ref{eq2}).\\
    $(i)$ From Lemma \ref{lem4}, we know that if $(z_{1}, z_{2})$ is a solution of (\ref{eq2}), and $v_{3}(z_{i}) < 0$ for any $i \in \{1,2
    \}$, then $v_{3}(z_{1}) = v_{3}(z_{2}) = -k$ for some positive integer $k$. This transform (\ref{eq2}) into $b_{1}u_{1}^{2} -b_{2}u_{2}^{2} = 15 \cdot p \cdot 3^{2k}$, where $u_{i}$ denotes $3$-adic units. From here, it is evident that $b_{1}$ can not be divisible by $3$ as $\gcd(b_{1},b_{2}) \not \equiv 0 \pmod 3$. Hence $b_{1} \equiv 0 \pmod 3 \implies z_{2} \equiv 0 \pmod 3$. This in turn implies $$\frac{b_{1}}{3}z_{1}^{2} \equiv 5p \pmod 3 \implies \Big(\frac{b_{1/3}}{3}\Big) = \Big(\frac{2p}{3}\Big) = 1.$$ Hence, if $3$ divides $b_{1}$, then $b_{1}$ can only be one of $3, 6p, 30$ or $15p$ as a necessary condition for the existence of $3$-adic solutions. Now the result follows.\\
    $(ii)$ In a very similar manner to the previous method, we notice from (\ref{eq2}) that if $b_{2} \equiv 0 \pmod 3$, then $$-\frac{b_{2}}{3}z_{2}^{2} \equiv 5p \pmod 3 \implies \Big(\frac{b_{2/3}}{3}\Big) = - \Big(\frac{2p}{3}\Big) = -1.$$ Hence $b_{2}$ can only assume one of the values from $15, 3p, -3$ or $-15p$ for the existence of $3$-adic solutions. The result now follows immediately. \\
    $(iii)$ Applying the same method used in the first part of this proof, we can immediately see that if $p$ divides $b_{1}$, then $p$ must divide $z_{2}$ from (\ref{eq2}) and hence $\Big(\frac{b_{1}/p}{p}\Big) = \Big(\frac{15}{p}\Big) = 1 \implies b_{1} = p,2p, 15p, 30p$. This concludes the proof. \\
    $(iv)$ Similar to the previous case, one can notice that $b_{2} \equiv 0 \pmod p$ implies $\Big(\frac{-b_{2}}{p}\Big)$ is a quadratic residue modulo $p$ i.e. $b_{2} = \pm p$ or $\pm 15p$. The result now follows immediately.\\
    $(v)$ Suppose $\gcd(b_{1},b_{2}) \equiv 0 \pmod 5$. Then from Lemma \ref{lem4} and, Lemma \ref{lem4.1}, we can rewrite (\ref{eq3}) as either $b_{1}u_{1}^{2} - b_{1}b_{2}u_{3}^{2} = 24 \cdot p \cdot 5^{2k}$ or $b_{1}z_{1}^{2} - \frac{b_{1}b_{2}}{25}u_{3}^{2} = 24p$ where $u_{i} \in \mathbb{Z}_{5}^{*}, z_{i} \in \mathbb{Z}_{5}$ for $i \in \{1,3\}$ and $k$ is a positive integer. The solution to the first equation implies $b_{1} \equiv 0 \pmod {5^{2}}$, a contradiction.  Whereas a solution to the second equation implies $\Big(\frac{b_{1}b_{2}/25}{5}\Big) = \Big(\frac{-24p}{5}\Big) = \Big(\frac{p}{5}\Big) = -1$. The result now follows immediately. 
\end{proof}
\noindent Now, we are left with the following six cosets.
\begin{table}[htbp] 
\caption{ Distribution of points}
\centering

\begin{tabular}{ |c|c |c|c| }
\hline

 \begin{tabular}{@{}c@{}}Representative\\  of  a coset of A \end{tabular}& Points in the coset Of A & \begin{tabular}{@{}c@{}}Representative\\  of  a coset of A \end{tabular} & Points in the coset Of A \\ 
 \hline

$(1,15)$ & $(10,- p),(15p,-1),(6p,15p)$ & $(2p,15)$& $ (30, -1),(5p, -p),(3, 15 p)$\\ 
 \hline
$(3,\pm p)$ & $(30,\mp 15), (5p, \mp 15p), (2p, \pm 1)$ & $ (3,\pm 5p)$ & $(2p, \pm 5), (30, \mp 3), (5p, \mp 3p)$\\
\hline

\end{tabular}

\end{table}

\noindent We rule out the possibility of five of the six cosets mentioned above in the following lemma. 

\begin{lem}\label{lem8}
    Let us suppose that $p \equiv 17, 113 \pmod {120}$. Then the homogeneous space corresponding to $(b_{1},b_{2})$ has no $5$-adic solution if $(b_{1},b_{2}) \in \{(1,15), (2p,15), (3, \pm 5p)\}$. Moreover, The homogeneous space corresponding to $(3,-p)$ has no $3$-adic solution.
\end{lem}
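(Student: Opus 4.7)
The plan is to rule out the relevant local solutions by exploiting the explicit quadratic-residue data supplied by $p \equiv 17, 113 \pmod{120}$. These congruences give $p \equiv 2 \pmod 3$ and $p \equiv 2$ or $3 \pmod 5$; in particular $\pm p$ and $\pm 3$ are all non-residues modulo $5$, and $-1$ is a non-residue modulo $3$. A routine check shows $b_1 b_2 \not\equiv 0 \pmod{25}$ for each of the pairs in the first claim, so the remark following Lemma \ref{lem4} lets us use that lemma at the prime $l = 5$ as well.

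For the four $5$-adic cases $(b_1,b_2) \in \{(1,15),(2p,15),(3,5p),(3,-5p)\}$ I would work uniformly with equation~(\ref{eq2}). First I handle negative $5$-adic valuations: Lemma \ref{lem4} forces a common value $v_5(z_1) = v_5(z_2) = v_5(z_3) = -k$ with $k \geq 1$, and after substituting $z_i = 5^{-k} u_i$ (with $u_i \in \mathbb{Z}_5^*$) and reducing~(\ref{eq2}) modulo $5$, the fact that $5 \mid b_2$ collapses the equation to $b_1 u_1^2 \equiv 0 \pmod 5$, which contradicts $\gcd(b_1,5) = 1$. Hence $v_5(z_i) \geq 0$ for every $i$. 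Since $5 \mid b_2$ and $5 \mid 15p$, equation~(\ref{eq2}) modulo $5$ now gives $b_1 z_1^2 \equiv 0 \pmod 5$ and so $5 \mid z_1$; writing $z_1 = 5 y_1$ and dividing~(\ref{eq2}) by $5$ leaves a relation which, reduced once more modulo $5$, forces $z_2^2 \equiv c \pmod 5$ with $c = -p, -p, -3, 3$ respectively. Since all four values are quadratic non-residues modulo $5$, no $5$-adic solution exists.

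For the $3$-adic claim about $(3,-p)$, a single-equation mod-$3$ analysis is too weak, and I would combine both defining equations. Lemma \ref{lem4} again disposes of the negative-valuation case by reducing~(\ref{eq2}) modulo $3$: a common negative valuation would give $p u_2^2 \equiv 0 \pmod 3$, impossible since $p \equiv 2 \pmod 3$. So $v_3(z_i) \geq 0$ for each $i$; reducing~(\ref{eq2}) modulo $3$ forces $3 \mid z_2$, and with $z_2 = 3 y_2$ the equation becomes $z_1^2 + 3 p y_2^2 = 5p$, while~(\ref{eq3}) simplifies to $z_1^2 + p z_3^2 = 8p$. Subtracting eliminates $z_1^2$; dividing by the $3$-adic unit $p$ yields $z_3^2 = 3(y_2^2 + 1)$, hence $3 \mid z_3$, and setting $z_3 = 3 y_3$ gives $3 y_3^2 = y_2^2 + 1$. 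Reducing modulo $3$ demands $y_2^2 \equiv -1 \pmod 3$, which is impossible. The main obstacle in the whole proof is precisely this last step: unlike the $5$-adic cases, the obstruction is not visible from either equation in isolation and only emerges after cancelling $z_1^2$ between~(\ref{eq2}) and~(\ref{eq3}), surfacing the non-residue statement $\bigl(\tfrac{-1}{3}\bigr) = -1$.
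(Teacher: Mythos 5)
Your proof is correct and follows essentially the same route as the paper: for the $5$-adic cases you reduce equation~(\ref{eq2}) modulo $5$ after extracting the forced factor of $5$ from $z_1$ and hit the non-residue condition (the paper packages this as the single criterion $\bigl(\tfrac{b_2/5}{5}\bigr)=\bigl(\tfrac{3p}{5}\bigr)=1$ failing for $b_2=15,\pm5p$), and for $(3,-p)$ you subtract~(\ref{eq3}) from~(\ref{eq2}) to eliminate $z_1^2$ and arrive at the same contradiction $(z_2/3)^2\equiv-1\pmod 3$. Your write-up is merely more explicit about the negative-valuation cases and the case-by-case residues than the paper's terse version.
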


\begin{proof}
    The first part of the result follows from the following observation. If $5$ does not divide $b_{1}$ while dividing $b_{2}$, then from \ref{eq2} one can get that $z_{1} \equiv 0 \pmod 5$ and $z_{2} \not \equiv 0 \pmod 5$. This, in return implies that $\big(\frac{b_{2}/5}{5}\big) = \big(\frac{3p}{5}\big) = 1$, a condition that rules out the cases $b_{2} = 15, \pm 5p$ if $5$ does not divide $b_{1}$. Hence the result follows. \\
    For the case $(b_{1},b_{2}) = (3,-p)$, we first observe that subtracting \ref{eq3} from \ref{eq2} yield into the equation $-3pz_{3}^{2} + pz_{2}^{2} = -9p$. Noticing the fact that $\frac{z_{2}}{3} \in \mathbb{Z}_{3}^{*}$, one can reach to the conclusion that $\big(\frac{z_{2}}{3}\big)^{2} \equiv -1 \pmod 3$, a contradiction. Hence the result follows.  
\end{proof}

\section{A homogeneous space with everywhere local solution}
\noindent In this section, we prove that for the elliptic curve $E_{p}: y^{2} = (x+6p) (x-9p) (x-18p)$ where $p \equiv 17, 113 \pmod {120}$, the homogeneous space corresponding to $(3,p)$ has local solutions for all places $l \leq \infty$. We prove the following lemma regarding that. \\

\begin{lem}\label{lem9}
Equations (\ref{eq2}) and (\ref{eq3}) have a local solution in $\mathbb{Q}_{l}$ for every prime $l$ for $(b_1,b_2)=(3,p)$.
\end{lem}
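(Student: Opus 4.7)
The plan is to exhibit explicit $l$-adic solutions of equations (\ref{eq2}) and (\ref{eq3}) with $(b_1,b_2)=(3,p)$ at each place in $S=\{2,3,5,p,\infty\}$; for primes $l\notin S$ local solubility comes automatically, because $(3,p)\in\mathbb{Z}_l^{\times}\times\mathbb{Z}_l^{\times}$ defines an unramified class and $E_p[2]$ has good reduction at $l$. The key input is a short list of quadratic residue facts: $p\equiv 17,113\pmod{120}$ implies $p\equiv 1\pmod 8$, $p\equiv 2\pmod 3$, and $p\equiv 2$ or $3\pmod 5$, whence $(-1/p)=(2/p)=1$ and $(3/p)=(5/p)=-1$.

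For $l=\infty$ and $l=2$, I plan to use the triple $(z_1,z_2,z_3)=(\sqrt{17p},6,3)$. Direct substitution gives $3\cdot 17p-36p=15p$ and $17p-9p=8p$, confirming both equations. Reality is clear from $17p>0$, and $\sqrt{17p}\in\mathbb{Z}_2^{\times}$ because $17p\equiv 1\cdot 1=1\pmod 8$, which is exactly where the hypothesis $p\equiv 1\pmod 8$ enters. For $l=3$ and $l=5$, I plan to use the triple $(\sqrt{8p},3,0)$: direct substitution yields $24p-9p=15p$ and $8p-0=8p$, and $\sqrt{8p}$ lies in the local unit group because $8p\equiv 1\pmod 3$ and $(8p/5)=(3/5)(p/5)=(-1)(-1)=1$.

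For $l=p$, the plan is to set $z_1=p$, which reduces the system to $z_2^2=3p-15$ and $z_3^2=p-8$. Modulo $p$ these become $z_2^2\equiv-15$ and $z_3^2\equiv-8$, and the Legendre symbols factor as $(-15/p)=(-1/p)(3/p)(5/p)=1$ and $(-8/p)=(-1/p)(2/p)^3=1$, so Hensel's lemma lifts them to genuine $z_2,z_3\in\mathbb{Z}_p^{\times}$, assembling a full $p$-adic solution.

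The main obstacle I anticipate is the case $l=2$: the naive choice $z_3=0$ forces $z_1^2=8p$, which has odd $2$-adic valuation and therefore admits no $\mathbb{Q}_2$-solution. One must instead adjust $z_3$ so that $8p+pz_3^2$ becomes a unit square in $\mathbb{Z}_2$, and $z_3=3$ produces $17p$; it is precisely the congruence $p\equiv 1\pmod 8$ that then ensures $17p\equiv 1\pmod 8$, making the triple succeed. The other four cases are essentially forced once the residue calculations are in place, so the crux of the argument is finding this $2$-adic triple and pinpointing the congruence condition it uses.
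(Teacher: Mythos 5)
Your proposal is correct, and all the arithmetic checks out: with $(b_1,b_2)=(3,p)$ the triples $(\sqrt{17p},6,3)$, $(\sqrt{8p},3,0)$ and $(p,\sqrt{3p-15},\sqrt{p-8})$ do satisfy $3z_1^2-pz_2^2=15p$ and $3z_1^2-3pz_3^2=24p$ identically, and the conditions $17p\equiv 1\pmod 8$, $8p\equiv 1\pmod 3$, $\big(\tfrac{8p}{5}\big)=1$, $\big(\tfrac{-15}{p}\big)=\big(\tfrac{-8}{p}\big)=1$ follow exactly from $p\equiv 17,113\pmod{120}$. Your route differs from the paper's in a useful way: the paper treats each bad place by fixing some of the $z_i$ at integer values, reducing to a one‑variable congruence, and invoking Hensel's lemma separately for each equation, whereas you exhibit exact simultaneous solutions of both equations whose only non‑rational entry is a square root, so the entire local verification collapses to checking that one explicit integer is a square in $\mathbb{Q}_l$. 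This is cleaner and in places more watertight: for instance at $l=p$ the paper sets $z_1=0$ (which sits awkwardly with the requirement $z_1\in\mathbb{Q}_l^{*}$), while your choice $z_1=p$ realizes the same congruences $z_2^2\equiv-15$, $z_3^2\equiv-8\pmod p$ without that issue; and at $l=2,3,5$ your exact identities avoid the paper's step of fixing two variables and asking one remaining $z_1$ to satisfy two different equations at once. The one spot where you are terser than the paper is $l\notin S$: the paper runs the Hasse--Weil bound on the genus‑one homogeneous space plus a smoothness/Hensel argument, while you appeal to the standard fact that at primes of good reduction with $b_1,b_2\in\mathbb{Z}_l^{\times}$ the local condition is automatic; that fact is correct, but if you want the proof self‑contained you should either cite it precisely (Silverman, X.4.5 or the Hasse--Weil argument) or reproduce the paper's lifting argument.
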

\begin{proof}
A twist of a smooth projective curve $C/ \mathbb{Q}$ is a smooth curve $C^{\prime}/ \mathbb{Q}$ that is isomorphic to $C$ over $\bar{\mathbb{Q}}$ (see \cite{Silverman}, Section X$.2$). We use the following version of Hensel's lemma to show that for $(b_{1},b_{2}) = (3,p)$, equations (\ref{eq2}) and (\ref{eq3}) have a local solution in $\mathbb{Q}_{l}$ for every place $l$. Hensel's lemma states that if $f(x)$ is a polynomial with coefficients that are $l$-adic integers and $f(x_{1}) \equiv 0 \pmod l$ for $x_{1} \in \mathbb{Z}$, then if $f^{\prime}(x_{1}) \not \equiv 0 \pmod l$, there exists a $l$-adic integer $x$ with $x \equiv x_{1} \pmod l$ such that $f(x) = 0$ (see \cite{Wash}, Theorem A$.2$). \\
Every homogeneous space of $E_{p}$ is a twist of $E_{p}$ (see \cite{Silverman}, Proposition $3.2$). First, we consider $l > 5$, $l \neq p$. Suppose $C$ is the homogeneous space given by (\ref{eq2}) and (\ref{eq3}) corresponding to the pair $(3,p)$. Then $C$ is a twist of $E_{p}$, and in particular, it has genus $1$. By the Hasse-Weil bound, we have 
$$\# C(\mathbb{F}_{l})\;\geq \;1+l-2\sqrt{l}\;\geq \;2 \qquad \mbox{ for } l > 5, \text{ } l \neq p.$$
We can choose a solution $(z_{1},z_{2},z_{3}) \in \mathbb{F}_{l} \times \mathbb{F}_{l} \times \mathbb{F}_{l}$ such that not all three of them are zero modulo $l$. Now $z_{1} \equiv z_{2} \equiv 0 \pmod l$ implies $l^{2}$ divides $15p$, a contradiction. Similarly, $z_{1} \equiv z_{3} \equiv 0 \pmod l$ implies $24p \equiv 0 \pmod {l^{2}}$, contradiction again. One can now suitably choose two of $z_{1},z_{2}$ and $z_{3}$ to convert equations (\ref{eq2}) and (\ref{eq3}) into one single equation of one variable with a simple root over $\mathbb{F}_{l}$. That common solution 
can then be lifted to $\mathbb{Q}_{l}$ via Hensel's lemma.\\
\noindent For $l = p$, one can notice that fixing $z_{1} = 0$ turn equations (\ref{eq2}) and (\ref{eq3}) with simple roots $z_{i} \not \equiv 0 \pmod p$ for $i \in \{2,3\}$ and hence can be lifted to a solution in $\mathbb{Z}_{p}$ via Hensel's lemma.\\
\noindent For $l=3$, first, we fix $z_{2} = z_{3} = 1$ to reduce equations (\ref{eq2}) and (\ref{eq3}) to equations of only one variable. Now observing $z_{1} \not \equiv 0 \pmod 3$ is a simple root that Hensel's lemma can lift, we conclude this case.\\
\noindent For the case $l=5$, notice that if $p \equiv 2 \pmod 5$, then fixing $z_{2} = 1$ and $z_{3} = 2$ gives the equation $3z_{1}^{2} \equiv 2 \pmod 5$ for which $z_{1} \equiv 2 \pmod 5$ is a simple root and hence can be lifted by Hensel's lemma. Similarly, for the case $p \equiv 3 \pmod 5$, we can fix $z_{2} = 1$ and $z_{3} = 2$ and then notice that $z_{1} \equiv 1 \pmod 5$ is then a simple root and hence can be lifted. This concludes the proof for $l=5$.\\ 
Finally for the case $l=2$, choose $z_{2} = 2$ and $z_{3} = 1$. This turns (\ref{eq2}) and (\ref{eq3}) into the following;
\begin{align}
    3z_{1}^{2} - 4 = 15p \implies 3z_{1}^{2} \equiv 3 \pmod 8,\\
    3z_{1}^{2} - 3pz_{3}^{2} = 24p \equiv 0 \pmod 8 \implies 3z_{1}^{2} \equiv 3z_{3}^{2} \pmod 8.
\end{align} In both the cases we now have $z_{1}^{2} \equiv 1 \pmod 8$ which by Hensel's lemma can be lifted to $\mathbb{Q}_{2}$. Hence proved.   
\end{proof}
\section{Class number divisibility and $2$-Selmer rank}
\noindent We are now in a position to prove Theorem \ref{mainthm}. Identifying $a,b$ and $c$ in Lemma \ref{lem1} with $9p, 6p$ and $18p$ respectively, we first notice that the elliptic curve $E_{p}: y^{2} = (x+6p) (x-9p) (x-18p)$ mentioned in Theorem \ref{mainthm} is same as the elliptic curve $E$ in Lemma \ref{lem1}. 
\begin{proof}[Proof of Theorem \ref{mainthm}]
    Let $E_{p}: y^{2} = (x+6p) (x-9p) (x-18p)$ be the elliptic curve where $p \equiv 17, 113 \pmod {120}$. the first part of the statement is now evident as Lemma \ref{lem9} proves that the homogeneous space corresponding to $(3,p)$ is the only one with local solutions for all primes $l \leq \infty$. This proves that the $2$-Selmer rank of $E_{p}$ is exactly one. \\ 
    \noindent Now for the second part, we get that $\mathbb{Q}(\sqrt{abc}, \sqrt{r+ (b-a-c)t^{2}}) = \mathbb{Q}(\sqrt{3p}, \sqrt{r-21pt^{2}})$. We notice that $\gcd(s,3p) = 1 \implies \gcd(s,abc) = 1$. Also $3$ ramifies in $\mathbb{Q}(\sqrt{3p}, \sqrt{r-21pt^{2}})$. Hence, from Lemma \ref{lem3}, the result follows.  
\end{proof}
\begin{proof}[Proof of Corollary \ref{cor1}]
    We first notice that $p \equiv 53, 77 \pmod {120} \implies p \equiv 5 \pmod 8$, which implies $\big(\frac{2}{p}\big) = -1$. From the $2$-descent method in Section $5$, one can see Lemma \ref{lem7}.$(iii)$ is the only argument where we have used the fact $\big(\frac{2}{p}\big) = 1$. So, in this case, the homogeneous spaces corresponding to each possible point except $(5p, \pm 5), (1,15), (1,5p)$ and $(3,p)$ in $\mathbb{Q}(S,2) \times \mathbb{Q}(S,2)$ are still ruled out from having $l$-adic solutions for all primes $l \leq \infty$.\\
    \noindent Firstly, for $(5p, \pm 5)$, we notice that $v_{p}(z_{1}) < 0$ is not possible, and hence $v_{p}(z_{i}) \geq 0$ for $i \in \{1,2\}$ from Lemma \ref{lem4}. This, in turn implies that $$z_{2} \equiv 0 \pmod p \implies z_{1}^{2} \equiv 3 \pmod p, \text{ a contradiction}.$$
    In a very similar way, one can notice that local solutions for the homogeneous space corresponding to $(1, 15)$ implies $z_{1}^{2} \equiv -p \pmod 5$ and to $(1, 15)$ implies $z_{1}^{2} \equiv -3 \pmod 5$, both contradictions.\\ 
    \noindent For the homogeneous space corresponding to $(3,p)$, we first observe that for $l = p$, $v_{p}(z_{1}) < 0$ is not a possibility from Lemma \ref{lem4}. Hence, $v_{p}(z_{1}), v_{p}(z_{3}) \geq 0$. But then from \ref{eq2}, we get that $3z_{1}^{2} - 3p z_{3}^{2} = 24p$ which implies $z_{1} \equiv 0 \pmod p$ and hence $z_{3}^{2} \equiv -8 \pmod p$, a contradiction as $\big(\frac{2}{p}\big) = -1$ now that $p \equiv 5 \pmod 8$. This proves that $(3,p)$ will not have any $p$-adic solution and hence shows that the $2$-Selmer rank of $E_{p}$ is zero if $p \equiv 53, 77 \pmod {120}$. This ends the proof of the result.    
\end{proof}
\noindent We can now prove Corollary \ref{cor2} by explicitly constructing an infinite family of bi-quadratic fields with even class numbers from the points of $E_{p}$. The Parity conjecture ensures $r(E_{p}) = 1$ for $p \equiv 17, 113 \pmod {120}$ as the $2$-Selmer rank of $E_{p}$ is $1$. We, though, use a more straightforward characterization for the existence of points of infinite order below.
\begin{proof}[Proof of Corollary \ref{cor2}]
    We first notice that the $t_{0}$ even implies the Mordell-Weil rank of $E_{p}$ is at least $1$. If $P_{0} = \big(\frac{r_{0}}{t_{0}^{2}}, \frac{s_{0}}{t_{0}^{3}}\big)$ is a point of infinite order in $E_{p}(\mathbb{Q})$ with $t_{0}$ even, one can immediately observe from the duplication formula that $$x(2P) = \Big(\frac{r_{1}}{t_{1}^{2}}\Big) = \Big(\frac{r_{0}^{4} - 8 \cdot 972p^{3} \cdot r_{0}t_{0}^{6} + 4 \cdot 21p \cdot 972p^{3} \cdot t_{0}^{8}}{4s_{0}^{2}t_{0}^{2}}\Big).$$ This implies $t_{1}$ is even as $r_{0}^{4} - 8 \cdot 972p^{3} \cdot r_{0}t_{0}^{6} + 4 \cdot 21p \cdot 972p^{3} \cdot t_{0}^{8}$ is odd when $t_{0}$ is even. A similar calculation involving duplication formula shows that $\gcd(s_{0}, 3p) = 1 \implies \gcd(s_{1},3p) = 1$ too. This is because if $s$ is divisible by either $3$ or $p$, then so is $r$ and hence $$\gcd(s_{0}, 3p) = 1 \implies \gcd(r_{0}, 3p) = 1 \implies \gcd(r_{1}, 3p) = 1 \implies \gcd(s_{1}, 3p) = 1.$$  Also, $3$ ramifies in $K_{1} / \mathbb{Q}$. Hence the construction of an unramified abelian extension for the bi-quadratic field $K_{1} = \mathbb{Q}(\sqrt{3p}, \sqrt{r_{1} - 21pt_{1}^{2}})$ follows through Lemma \ref{lem3}. The proof now follows through an inductive argument. 
\end{proof}
\noindent We conclude this section with a small table containing elliptic curves $E_{p}$ for primes $p \equiv 17,113 \pmod {120}$ with their respective $2$-Selmer ranks and bi-quadratic fields with corresponding class numbers in support of our result. Each example below can generate an infinite family of bi-quadratic fields with even class numbers by Corollary \ref{cor2}. Computations have been done in Magma \cite{Magma} software. While we define $\#  \,\mbox{Sel}_2 (E_{m,p}/\mathbb{Q}) = 2^{2+s(E_{m,p})}$, and refer to $s(E_{m,p})$ as the $2$-Selmer rank in the following table, $2+s(E_{m,p})$ will denote the $2$-Selmer rank in Magma.\\

\begin{table}[htbp] 
\caption{$2$-Selmer rank and Class numbers}
\centering

\begin{tabular}{ |c|c |c|c| }
\hline

 \begin{tabular}{@{}c@{}}$p$ \end{tabular}& $s(E_{p})$ & \begin{tabular}{@{}c@{}}$P = \big(\frac{r}{t^{2}}, \frac{s}{t^{3}}\big) \in E_{p}(\mathbb{Q})$ \end{tabular} & $h(K)$ \\ 
 \hline

$17$ & $1$& $\left( \dfrac{5257}{16},\; \dfrac{83581}{64}\right)$ & $2560$\\
 \hline
$113$ & $1$ & $\left( \dfrac{103952278736903209}{48681040752400},\; \dfrac{6066145443591398465996627}{339656383916830232000}\right)$ & No information\\
\hline

\end{tabular}

\end{table}
\noindent We conclude by noting that the choice of $-6p, 9p$ and $18p$ is solely to ensure the ramification of the prime $3$ in the extension $K / \mathbb{Q}$ as well as to ensure $ab+bc = ac$ as mentioned in Lemma \ref{lem1}. A different choice satisfying the conditions of Section $2$ would still result in an unramified abelian extension and hence even class number. 
\section*{Acknowledgement}
\noindent The first author would like to acknowledge DST-SERB for providing the grant through the Start-Up Research Grant (SRG/2020/001937) as well as BITS-Pilani, Hyderabad Campus, for providing amenities. The second author would like to acknowledge the fellowship (File No:09/1026(0029)/2019-EMR-I) and amenities provided by the Council of Scientific and Industrial Research, India (CSIR) and BITS-Pilani, Hyderabad. The third author would like to acknowledge the fellowship (File No:09/1026(0036)/2020-EMR-I) and amenities provided by the Council of Scientific and Industrial Research, India (CSIR) and BITS-Pilani, Hyderabad.

\end{document}